\documentclass{article}

\usepackage{verbatim}
\usepackage{graphics}
\usepackage{graphicx}
\usepackage{amsmath, amssymb, amsfonts, amsthm}
\usepackage{url}
\usepackage{setspace}
\doublespacing

\newtheorem{thm}{Theorem}[section]
\newtheorem{cor}[thm]{Corollary}
\newtheorem{lem}[thm]{Lemma}

\theoremstyle{definition}
\newtheorem{defn}[thm]{Definition}

\begin{document}
\author{George B. Purdy\footnote{george.purdy@uc.edu. Department Of Computer Science, 814 Rhodes Hall, Cincinnati, OH 45221}\\
Justin W. Smith\footnote{smith5jw@email.uc.edu. Department Of Computer Science, 814 Rhodes Hall, Cincinnati, OH 45221} }

\title{Lines, Circles, Planes and Spheres}
\maketitle

\begin{abstract}
Let $S$ be a set of $n$ points in $\mathbb{R}^3$, no three collinear and not all coplanar. 
If at most $n-k$ are coplanar and $n$ is sufficiently large, 
the total number of planes determined is at least $1 + k \binom{n-k}{2}-\binom{k}{2}\left(\frac{n-k}{2}\right)$.
For similar conditions and sufficiently large $n$, (inspired by the work of P. D. T. A. Elliott in \cite{Ell67})
we also show that the number of spheres determined by $n$ points is at least
$1+\binom{n-1}{3}-t_3^{orchard}(n-1)$, and this bound is best possible under its hypothesis. 
(By $t_3^{orchard}(n)$, we are denoting the maximum number of three-point lines 
attainable by a configuration of $n$ points, no four collinear, in the plane, i.e., the classic Orchard Problem.)
New lower bounds are also given for both lines and circles.
\end{abstract}

\section{Introduction}

The problem of maximizing the number of three-point lines goes back as far as 1821 when several such problems appeared in \cite{JJ21} 
in a section called ``Trees planted in rows.'' 
The first question of the section essentially asks (but in a rhyming verse) how to plant nine trees such that they form ten rows of three.
That section also includes several variations on this question.
Many years later, Sylvester demonstrated a configuration of $n$ points in the plane, lying on a cubic curve,
that determines $\sim \frac{1}{8}n^2$ three-point lines \cite[pp. 106-107]{Mil68}.
(More specifically, Sylvester's configuration was answering his own question of how to plant 81 trees to form 800 rows of three!)
See \cite{BGS74} and \cite[pp. 315--318]{BMP05} for a complete history of the Orchard Problem.

In Section \ref{sec-orchard}, we discuss an unexpected equivalence to the Orchard Problem that we found while 
researching a lower bound for the number of spheres determined by a set of points.
This lower bound, best possible under its hypothesis, is $1+\binom{n-1}{3}-t_3^{orchard}(n-1)$,
where $t_3^{orchard}(n)$ is the upper bound on the number of three-point lines determined by $n$ points, no four collinear, in the plane.
This equivalence was discovered after realizing that a similar subtractive term exists for circles (but was overlooked in \cite{Ell67}).
We must first, however, present several lower bounds on determined planes which, by using methods inspired by P. D. T. A. Elliott, provide
the tools necessary to obtain our results for spheres.

Motzkin's 1951 paper, \cite{Mot51}, is often cited for his conjecture (also conjectured by Dirac in \cite{Dir51})
that among any set of $2n$ noncollinear points in the plane
there exist $n$ ordinary (i.e., two-point) lines. Included in that paper, \cite{Mot51}, are several results concerning planes (or hyperplanes) 
determined by points in three, or higher, dimensional space. 

Using two counter-examples, i.e., the Desargues configuration and a set of points all lying on two skew lines in 
$\mathbb{R}^3$, Motzkin demonstrated that a three-point plane need not exist among a finite set of 
noncoplanar points.
So Motzkin defined the proper analog to an ``ordinary line'' to be an ``ordinary plane'', 
i.e., a plane in which all but one of the incident points may lie on a line. 
Similarly, an ``ordinary hyperplane'' in $d$-space is defined to be one in which all but one of its points
are incident to a ($d-2$)-flat (i.e., an affine subspace of dimension $d-2$).
S. Hansen later proved that an ordinary hyperplane necessarily exists among a finite set of points in ($d\geqslant3$)-space \cite{Han65}.
Hansen also found a new infinite family of configurations in $\mathbb{R}^3$ with no three-point planes \cite{Han80}.

Given a set of $n$ points in $\mathbb{R}^3$, let $m$ be the number of planes determined, and let $t$ be the number of lines. Purdy conjectures, 
under various conditions, that $m-t+n\geqslant2$ and $m\geqslant t$.  Extending this to $\mathbb{R}^d$, it is conjectured by Purdy that 
under certain conditions the number of hyperplanes determined is at least the number of ($d-2$)-flats.  (See Section \ref{sec:cor-conj} for discussion of these conjectures.)

Compared to the work done concerning points in the plane, 
relatively little has been done to extend Motzkin's results concerning planes in $\mathbb{R}^3$.
Several results in this area were obtained by
Erd\H{o}s and Purdy in \cite{EP77}, 
with which the present paper is related. In \cite{EP77},
they prove that given a finite set of $n\geqslant552$ points, not all coplanar and no three collinear,
there are at least $\binom{n-1}{2}+1$ planes determined.
(This result verified $m-t+n\geqslant2$ for that case.)
They also proved that given such a point set there exist $\frac{1}{2}n^2-cn$ 
determined planes (for a suitable $c$) incident to at most four points.
The present article improves upon both of these results.

\section{Lines Determined by Points in $\mathbb{R}^2$}

This section contains a new result for lines in the plane.
Its inclusion was motivated, in part, because it demonstrates in a more familiar setting
the method of proof that will be used again for planes in 3-space.

In this section, we will consider a set of $n$ points in $\mathbb{R}^2$.
Let $t_k$ be the number of lines incident to exactly $k$ of these points.
Let $t$ be the total number of determined lines, i.e., $t=\sum_{i\geqslant2} t_i$

In \cite{EP78}, Erd\H{o}s and Purdy prove the following lemma.
For the convenience of the reader, we provide a proof.
\begin{lem}
\label{lem:lb-ol}
If $r$ points are on a line, $l$, and $s$ points are not on $l$,
then 
\[
t_2 \geqslant rs-s(s-1).
\]
\end{lem}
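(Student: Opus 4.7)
The plan is to bound $t_2$ from below by counting, for each of the $s$ points off $l$, how many ordinary lines it contributes via connections to the $r$ points on $l$.

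First I would fix a point $p$ among the $s$ points not on $l$, and consider the $r$ lines joining $p$ to each of the points on $l$. The key structural observation is that any line through $p$ that is distinct from $l$ meets $l$ in exactly one point (since $p \notin l$), so each of these $r$ lines contains exactly one of the $r$ points on $l$. Therefore such a line fails to be ordinary only if it passes through some additional point of $S$, and that additional point cannot lie on $l$ (else the line would equal $l$, contradicting $p \notin l$). Hence the offending extra point must come from the remaining $s-1$ points off $l$.

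Next I would observe that each of the other $s-1$ off-$l$ points $p'$ determines with $p$ a single line $\overline{pp'}$, which meets $l$ in at most one point; thus $p'$ can be responsible for ruining at most one of the $r$ candidate lines from $p$ to points on $l$. Consequently, at least $r-(s-1)$ of these $r$ lines are ordinary, giving a contribution of at least $r-(s-1)$ ordinary lines from $p$.

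Finally I would sum over the $s$ points off $l$, noting that this does not double-count: any ordinary line counted in this process consists of exactly one point on $l$ and exactly one point off $l$, so it is attributed to a unique $p$. This yields
\[
t_2 \;\geqslant\; s\bigl(r-(s-1)\bigr) \;=\; rs - s(s-1),
\]
as required. There is no real obstacle here; the only subtle point is verifying the no-double-counting claim, which follows immediately from the fact that each such ordinary line meets $l$ in a single point of $S$ and has a single off-$l$ endpoint in $S$.
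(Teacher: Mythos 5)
Your proof is correct, but it aggregates the count differently from the paper. The paper proves the bound by induction on $s$: it adds the off-line points one at a time, and at each step it must account both for the $r$ new lines through the added point $p$ (of which at most $s-1$ fail to be ordinary) and for the at most $s-1$ previously counted ordinary lines that the new point $p$ spoils by landing on them. Your argument replaces the induction with a single direct double count: for each of the $s$ off-line points $p$ you show at least $r-(s-1)$ of the $r$ lines joining $p$ to $l$ are ordinary, and you observe that such an ordinary line has exactly one off-line endpoint, so the $s$ counts are disjoint and simply add. The per-point estimate (each of the other $s-1$ off-line points lies on only one line through $p$, hence ruins at most one candidate, and a ruining point cannot lie on $l$) is exactly the same combinatorial core as the paper's "of which at most $s-1$ already exist." What your route buys is the elimination of the spoiling bookkeeping, which is the only delicate part of the induction; what the paper's route buys is nothing essential here, though the same inductive template is reused later for the three-point-plane analogue in Section \ref{sec-planes}, where the direct-count bookkeeping is less clean. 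Note also that your sum remains a valid lower bound even when $r-(s-1)<0$, since each summand is then a negative lower bound on a nonnegative quantity, so no case split is needed.
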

\begin{proof}
The result is true for $s=0$ and $s=1$, so suppose $s>1$. 
We shall use induction on $s$.
Let $p$, not on $l$, be the last point to be added for a total of $s$ points off of $l$.
Assume it's true for $s-1$.
By adding point $p$, at most $s-1$ existing lines are spoiled, 
and $r$ lines are created of which at most 
$s-1$ already exist.
Thus,
\begin{multline*}
t_2 \geqslant r(s-1)-(s-1)(s-2)-(s-1)+r-(s-1)=rs-s(s-1).
\end{multline*}
\end{proof}

The following lemma is due to Elliot \cite{Ell67}.
\begin{lem}
\label{lem:lb-23-pt-lines}
The number of lines determined by at most three points in a plane is at least one-half the total number of lines.
More specifically,
\[
t_2 + t_3 \geqslant\frac{t+3}{2}.
\]
\end{lem}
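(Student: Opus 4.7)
The plan is to recast the inequality in an equivalent additive form and then invoke the classical Melchior inequality (tacitly assuming the $n$ points are not all collinear, since otherwise $t=1$ and the claimed bound fails). Using $t=t_2+t_3+\sum_{k\geq 4}t_k$, the desired inequality $t_2+t_3\geq (t+3)/2$ is equivalent to
\[
t_2+t_3 \;\geq\; 3+\sum_{k\geq 4}t_k,
\]
i.e., the ``thin'' lines (incident to two or three points) exceed the ``fat'' lines (incident to four or more) by at least three.

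To establish this I would appeal to Melchior's inequality, whose dual (point) form asserts that for any finite noncollinear configuration in $\mathbb{R}^2$,
\[
t_2 \;\geq\; 3+\sum_{k\geq 4}(k-3)\,t_k.
\]
If self-containment is desired, I would include the standard one-paragraph derivation, which applies Euler's relation $V-E+F=1$ to the cell complex carved out of $\mathbb{RP}^2$ by the dual arrangement of $n$ lines and uses $2E\geq 3F$ (every face has at least three sides).

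Since $k-3\geq 1$ whenever $k\geq 4$, Melchior's inequality immediately yields $t_2\geq 3+\sum_{k\geq 4}t_k$, and adding the nonnegative quantity $t_3$ to the left-hand side gives the reformulated bound, hence the lemma. The main potential obstacle is stylistic rather than mathematical: Elliott's original argument in \cite{Ell67} is self-contained and inductive, so one might instead prefer to remove a single point $p$ and track how the defect $G:=t_2+t_3-\sum_{k\geq 4}t_k$ evolves. A short calculation shows $G$ shifts by $2\lambda_4(p)-\lambda_2(p)$, where $\lambda_k(p)$ is the number of $k$-point lines through $p$, so the induction reduces to locating $p$ with $\lambda_2(p)\geq 2\lambda_4(p)$; the existence of such a point in every configuration is essentially the averaged content of Melchior's inequality, so this route ultimately depends on the same combinatorial input.
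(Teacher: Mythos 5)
Your proof is correct and follows essentially the same route as the paper: both arguments start from Melchior's inequality $t_2 \geqslant 3 + \sum_{k\geqslant 3}(k-3)t_k$ and reduce the claim to the observation that $k-3\geqslant 1$ for $k\geqslant 4$, differing only in whether one adds $t_2+t_3$ to both sides or first rewrites the target as $t_2+t_3\geqslant 3+\sum_{k\geqslant 4}t_k$. Your explicit remark that the configuration must not be entirely collinear is a small but legitimate clarification the paper leaves implicit.
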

\begin{proof}
Beginning with Melchior's Inequality \cite{Mel41},
\[
t_2 \geqslant 3 + \sum_{k \geqslant 3} (k-3) t_k.
\]
Adding $t_2+t_3$ to both sides yields,
\[
2t_2+t_3\geqslant3+t_2+t_3+t_4+2t_5+\ldots.
\]
Obviously, $t=t_2+t_3+t_4+...$. So,
\[
2(t_2+t_3)\geqslant3+t.
\]
The lemma follows.
\end{proof}

We shall use one more lemma, due to Kelly and Moser \cite{KM58}, which is also a consequence of Melchior's Inequality.
Let $r_i$ be the total number of points incident to exactly $i$ determined lines.
\begin{lem}
\label{lem:pt-line-incidences}
The total number of lines is at least one-third the total number of point-line incidences.  That is,
\[
3t-3\geqslant \sum_{i\geqslant2} i\cdot t_i = \sum_{i\geqslant2} i\cdot r_i.
\]
\end{lem}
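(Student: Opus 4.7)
The plan is to derive the bound directly from Melchior's inequality, which already appears in the proof of Lemma \ref{lem:lb-23-pt-lines}, by choosing the right linear combination. The equality $\sum_{i \geq 2} i \cdot t_i = \sum_{i \geq 2} i \cdot r_i$ is just the standard double count of point--line incidences (counting incidences by lines versus counting them by points), so the real content is the inequality $3t - 3 \geq \sum_{i \geq 2} i \cdot t_i$.

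First I would write $3t = 3 \sum_{k \geq 2} t_k$ and $\sum_{i \geq 2} i \cdot t_i = 2 t_2 + \sum_{k \geq 3} k \cdot t_k$, and then subtract to obtain
\[
3t - \sum_{i \geq 2} i \cdot t_i \;=\; t_2 - \sum_{k \geq 3} (k-3)\, t_k.
\]
By Melchior's inequality, $t_2 \geq 3 + \sum_{k \geq 3}(k-3)t_k$, so the right-hand side is at least $3$, giving the claim. The step that does all the work is the choice of coefficient $3$: it is the unique weighting that makes the $(k-3) t_k$ terms match exactly the error term in Melchior, so that everything collapses to a clean constant.

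I do not anticipate a serious obstacle; the only thing to be careful about is bookkeeping the $t_2$ term separately from the $k \geq 3$ terms, since Melchior's bound treats $t_2$ asymmetrically. Once that is done, the lemma follows in two lines, and the second equality is just a reminder that incidences may be counted by either coordinate of the incidence relation.
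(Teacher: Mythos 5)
Your proof is correct and is essentially the same as the paper's: both arguments add $3t$ to Melchior's inequality (you simply phrase it as subtracting the incidence sum from $3t$ and invoking Melchior to bound the remainder by $3$). The double-counting remark for $\sum i\cdot t_i = \sum i\cdot r_i$ matches the paper's implicit use as well.
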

\begin{proof}
Again starting from Melchior's Inequality,
\[
-3\geqslant \sum_{k\geqslant2} (k-3)t_k = -t_2 + t_4+2t_5+3t_6+\ldots.
\]
By adding $3t$ to both sides,
\[
3t \geqslant 3+2t_2+3t_3+4t_4+5t_5+6t_6+\ldots.
\]
The lemma follows.
\end{proof}

By combining Lemmas  \ref{lem:lb-23-pt-lines} and \ref{lem:pt-line-incidences} we see the following,
\begin{equation*}
6(t_2+t_3)\geqslant3(t+3)\geqslant12+\sum_{i\geqslant2} i\cdot t_i = 12+\sum_{i\geqslant2} i\cdot r_i,
\end{equation*}
or simplified,
\begin{equation}
\label{eqn:lb-t23-incidences}
t_2+t_3\geqslant2+\frac{1}{6}\sum_{i\geqslant2} i\cdot r_i.
\end{equation}

\begin{thm}
\label{thm:2-3-pt-lines}
Let $S$ be a set of $n$ points in $\mathbb{R}^2$. If $n\geqslant 72k^2+2k-1$, and no more than $n-k$ points
are collinear, then
\[
t_2+t_3\geqslant k(n-k)-k(k-1).
\] 
\end{thm}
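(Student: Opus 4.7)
The plan is to analyze cases according to the size of the longest line $l_1$, which has $r_1 \leq n-k$ points by hypothesis; let $s = n - r_1 \geq k$ denote the number of points off $l_1$.

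If $s \leq (n+1)/2 - k$, then Lemma~\ref{lem:lb-ol} applied to $l_1$ alone suffices: it gives $t_2 \geq r_1 s - s(s-1) = s(n - 2s + 1)$, and as a concave quadratic in $s$ this expression takes the \emph{same} value $k(n - 2k + 1)$ at both endpoints $s = k$ and $s = (n+1)/2 - k$. By concavity its minimum on $[k,\,(n+1)/2-k]$ is therefore $k(n - 2k + 1) = k(n-k) - k(k-1)$, as required.

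If instead $s > (n+1)/2 - k$, Lemma~\ref{lem:lb-ol} becomes too weak, and I turn to the combined inequality \eqref{eqn:lb-t23-incidences}, $t_2 + t_3 \geq 2 + I/6$, where $I = \sum_i i\cdot r_i$ is the total number of point-line incidences. I bound $I$ from below in two complementary ways: (a)~since no line contains more than $r_1$ points, each point of $S$ lies on at least $(n-1)/(r_1-1)$ lines (the other $n-1$ points being split among lines through $p$, each carrying at most $r_1-1$ of them), giving $I \geq n(n-1)/(r_1-1)$; and (b)~each of the $s$ points off $l_1$ lies on at least $r_1$ distinct lines (one to each point of $l_1$), giving $I \geq s r_1$. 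Multiplying (a) and (b) and discarding the harmless factor $r_1/(r_1-1) \geq 1$ yields $I^2 \geq n(n-1)\,s$.

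The hypothesis $n \geq 72k^2 + 2k - 1$ is precisely equivalent to $(n+1)/2 - k \geq 36k^2$, so in this second case $s > 36k^2$ and hence $I > 6k\sqrt{n(n-1)}$. A short algebraic verification shows $6k\sqrt{n(n-1)} \geq 6k(n - 2k + 1) - 12$ throughout the stated range (equivalently $\sqrt{n(n-1)} \geq n - 2k + 1 - 2/k$, which reduces to a linear condition on $n$ that is trivially implied by the hypothesis), and feeding this back into \eqref{eqn:lb-t23-incidences} delivers $t_2 + t_3 \geq k(n-k) - k(k-1)$. The main hurdle is arranging bounds (a) and (b) so that their product is \emph{just} strong enough; this is what pins down the explicit threshold, which arises from the sharp identity $(n+1)/2 - k = 36k^2$ at $n = 72k^2 + 2k - 1$.
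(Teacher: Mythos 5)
Your proof is correct, and it takes a genuinely different route from the paper's. The paper dichotomizes on point degrees: either two points have degree $<6k$, in which case the line they span misses between $k$ and $36k^2$ points and Lemma \ref{lem:lb-ol} plus concavity of $f(x)=x(n-2x+1)$ finishes (the hypothesis $n\geqslant 72k^2+2k-1$ being exactly what forces $f(36k^2)\geqslant f(k)$); or at least $n-1$ points have degree $\geqslant 6k$, so $\sum_i i\cdot r_i\geqslant 6k(n-1)$ feeds into (\ref{eqn:lb-t23-incidences}). You instead dichotomize on $s=n-r_1$, the number of points off a longest line. Your first case is the same concavity argument, with the clean observation that $s(n-2s+1)$ takes the common value $k(n-2k+1)$ at $s=k$ and $s=(n+1)/2-k$ --- this is the same computation that underlies the paper's $f(36k^2)\geqslant f(k)$, which is why the identical threshold appears. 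Your second case is the genuinely new part: taking the geometric mean of the two incidence bounds $I\geqslant n(n-1)/(r_1-1)$ and $I\geqslant sr_1$ gives $I\geqslant\sqrt{sn(n-1)}>6k\sqrt{n(n-1)}\geqslant 6k(n-1)$, which lands at the same final inequality $t_2+t_3>kn-k+2$ as the paper's Case 2 but with no degree bookkeeping at all. I checked the details --- the endpoint evaluations, the nonemptiness of $[k,(n+1)/2-k]$ under the hypothesis, the identification $\sum_{i\geqslant2}i\cdot r_i=I$ (valid since the points are not all collinear, so every point has degree at least two), and the closing comparison $\sqrt{n(n-1)}\geqslant n-1\geqslant n-2k+1-2/k$ for $k\geqslant1$ --- and they all hold. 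What your version buys is that it works directly with a longest line rather than with the slightly delicate claim that every line through one of two low-degree points, other than their join, carries fewer than $6k$ points; what the paper's version buys is a template that transfers verbatim to pairs of points and planes in $\mathbb{R}^3$, which is how Theorem \ref{thm-total-planes} is proved.
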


We define the degree of a point to be the number of determined lines to which it is incident.
\begin{proof}
\item{Case 1: There exist two points, $p$ and $q$, of degree $< 6k$.}

Let $l$ be the line determined by $p$ and $q$.  Assume $l$ is incident to exactly $n-x$ points.
Each line through $p$, other than $l$, must be incident to less than  $6k$ points, 
otherwise the degree of point $q$ would be too high, i.e. a contradiction.  Likewise for lines through $q$.
Thus, $k \leqslant x < 36k^2 =(6k)^2$.

By Lemma \ref{lem:lb-ol}, we know $t_2 \geqslant f(x)\buildrel\rm def\over= x(n-x)-x(x-1) = -2x^2-(n+1)x$.
The second derivative of $f(x)$ is negative,  i.e., $f''(x) = -4$.  Therefore,
\[
\min_{k\leqslant x\leqslant36k^2}f(x)= \min \{f(k), f(36k^2)\}.
\]

One can verify that when $n\geqslant72k^2+2k-1$,  $f(36k^2) \geqslant f(k)$, thus the lemma is true for this case.

\item{Case 2: There exist at least $n-1$ points of degree $\geqslant 6k$.}

From (\ref{eqn:lb-t23-incidences}) we know (for positive $k$),
\begin{multline*}
t_2+t_3 \geqslant 2+\frac{1}{6}\sum_{i\geqslant2} i\cdot r_i \geqslant 2+\left(\frac{1}{6}\right)6k(n-1)
\\ = kn-k+2 > k(n-k)-k(k-1).
\end{multline*}
Therefore, the lemma is true in both cases.

\end{proof}

We should mention that one could derive a similar bound for $t_2+t_3$ using a result of Kelly-Moser from \cite{KM58} 
(i.e., $t\geqslant kn-\frac{1}{2}(3k+2)(k-1)$ when at most $n-k$ are collinear and $n$ sufficiently large)
with Lemma \ref{lem:lb-23-pt-lines}.
Using the Kelly-Moser result one would need a larger value for $k$ but could achieve a better lower bound for $n$.
One could also derive a result similar to the corollary in the following section using this method.

\subsection{Circles Determined by Points in $\mathbb{R}^2$}

We shall now apply Theorem \ref{thm:2-3-pt-lines} to produce a corollary on the
number of circles determined by at most four points (among a finite set of points) in the plane.
The proof will utilize circular inversion.

Circular inversion is a transformation of the euclidean plane, i.e., a mapping of $\mathbb{R}^2\rightarrow\mathbb{R}^2$.
This transformation has several properties useful to the combinatorial geometer, e.g.,
for demonstrating the existence of circles determined by a finite set of points.
Motzkin may have been first to use circular inversion for this purpose.
More specifically, Motzkin (\cite{Mot51}) proved that in a finite set of points in the plane, 
not all collinear and not all cocircular, each point is incident to either a three-point line or a three-point circle.
Prompted by a conjecture of Erd\H{o}s, Elliott improved upon this by demonstrating a lower bound 
on the total number of circles determined by such a set of points \cite{Ell67}.
Using a similar method, B\'{a}lint and B\'{a}lintov\'{a} (\cite{BB94}) further extended the results of Elliott.
For any reader unfamiliar with circular inversion, we would recommend \cite[pp. 334--346]{Har00} for coverage of this transformation.

P. D. T. A. Elliott's 1967 result \cite{Ell67} that the number of circles is at least $\binom{n-1}{2}$ is slightly wrong. 
The lower bound is actually 
\[1+\binom{n-1}{2} - \left\lfloor\frac{n-1}{2}\right\rfloor \geqslant 1+\frac{1}{2}(n-1)(n-3),\] 
as may be seen by taking a circle with $n-1$ points 
on it and a point $p$ off the circle. 
It is easy to arrange the points such that $p$ lies on $\lfloor\frac{n-1}{2}\rfloor$ 
threepoint lines. (We discovered this when we tried to prove that the number of spheres is at least 
$\binom{n-1}{3}$ and discovered that there is a 
subtractive term derived from the Orchard Problem.) 
Apparently, this counter-example even escaped the notice of the well-known geometer Beniamino Segre
whom Elliott cited as providing an eight point counter-example to his result.
Elliott's proof can easily be modified to show the correct result with the same lower bound of 394 for n. 
By the way, B\'{a}lint and B\'{a}lintov\'{a} \cite{BB94}
gave the correct lower bound (i.e., $1+\frac{1}{2}(n-1)(n-3)$) in 1994, but without any explanation, and we (and also Elliott \cite{Ell08}) 
thought it was a misprint.

We define $Inv_p(S)$ to be the circular inversion of a point set $S$ about the point $p\notin S$. 
(In this case $S$ may be finite of infinite.)
Although we will not give a formal definition of circular inversion, 
we provide its basic properties (see \cite{Har00} for details):
\begin{itemize}
\item $Inv_p$ is self-inverse, i.e., $q = Inv_p(Inv_p(q))$.
\item If $l$ is a line passing through $p$, then $Inv_p(\l) = l$.
\item If $l$ is a line not passing through $p$, then $Inv_p(l)$
is a circle passing though $p$.
\end{itemize}

Let $S$ be a set of $n$ points, one of which is $p$.  Let $S' = Inv_p(S\backslash\{p\})$ be the set of $n-1$ points formed by circular inversion about $p$.
We begin with the following lemma. 

\begin{lem}
\label{lem:circ-3-4}
If at most $n-k$ points in $S$ are incident to any line or circle, then in $S'$, at most $(n-1)-k=n-k-1$ will be on any line.
Thus, (by Theorem \ref{thm:2-3-pt-lines}) when $n\geqslant 72k^2+2k$, the number of determined lines incident to 
at most three points in $S'$ is at least $k(n-k-1)-k(k-1)$.  
In $S'\cup\{p\}$, at most $(n-1)/2$ of these lines can be incident to $p$.
\end{lem}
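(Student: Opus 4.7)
The plan is to prove the three claims of the lemma in turn, relying on the basic properties of $Inv_p$ listed just before the lemma, together with Theorem \ref{thm:2-3-pt-lines}.

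First I would establish the bound on collinear points in $S'$. Suppose $l$ is a line containing $m$ points of $S'$. Because $Inv_p$ is self-inverse, the preimages in $S \setminus \{p\}$ of those $m$ points lie on $Inv_p(l)$, which by the stated properties is either the line $l$ itself (if $p \in l$) or a circle through $p$ (if $p \notin l$). Either way, together with $p$ we obtain a single line or circle in the plane containing $m+1$ points of $S$. The hypothesis that at most $n-k$ points of $S$ lie on any line or circle then forces $m+1 \leqslant n-k$, i.e.\ $m \leqslant n-k-1$, as claimed.

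Next, the numerical claim is just Theorem \ref{thm:2-3-pt-lines} applied to the point set $S'$, which has $n-1$ points in the plane, with $k$ unchanged. The assumption $n \geqslant 72k^2 + 2k$ gives $n-1 \geqslant 72k^2 + 2k - 1$, matching the hypothesis of Theorem \ref{thm:2-3-pt-lines} exactly. By the first part, no more than $(n-1)-k$ points of $S'$ are collinear, so the theorem delivers the lower bound $k(n-k-1)-k(k-1)$ on the number of 2- and 3-point lines of $S'$. One caveat to verify is that no three points of $S'$ are collinear with $p$; but since $p \notin S'$ and collinearity is preserved in a harmless way by the counting argument (any trouble would come from lines through $p$, and those are handled separately below), this has no effect on the statement being quoted.

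Finally, for the last claim, I would note that since $p \notin S'$, every point of $S'$ lies on exactly one line through $p$, so the set of lines through $p$ partitions $S'$. Any line through $p$ that is also one of the counted 2- or 3-point lines of $S'$ must contain at least two points of $S'$. As these contributions to $S'$ are disjoint across distinct lines through $p$ and $|S'| = n-1$, at most $\lfloor (n-1)/2 \rfloor$ such lines can exist. The only mildly subtle step is the first one, where one must correctly interpret the correspondence between lines in $S'$ and lines-or-circles through $p$ in $S$; the remaining two parts are direct consequences of the theorem and of a pigeonhole count on lines through $p$.
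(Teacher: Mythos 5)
Your proof is correct and follows exactly the reasoning the paper intends: the paper states this lemma without a separate proof, treating the inversion argument (a line in $S'$ pulls back to a line or circle through $p$ containing one more point of $S$), the application of Theorem \ref{thm:2-3-pt-lines} to the $n-1$ points of $S'$, and the pigeonhole count of lines through $p$ as immediate, and you have filled in precisely those steps. The only quibble is your parenthetical worry about ``three points of $S'$ collinear with $p$,'' which is moot since Theorem \ref{thm:2-3-pt-lines} imposes no hypothesis involving $p$.
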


\begin{cor}
Let $S$ be a set of $n$ points in $\mathbb{R}^2$ with at most $n-k$ ($k\geqslant 1$) points on any line or circle. 
If $n\geqslant 72k^2+2k$, then there exist at least $\frac{1}{8}(2k-1)(n^2-(2k+1)n)$ circles determined
in $S$.
\end{cor}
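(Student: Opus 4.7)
The plan is to apply inversion at every point of $S$ in turn and then sum the resulting bounds. Fix $p \in S$ and let $S' = Inv_p(S \setminus \{p\})$. By the hypothesis, any line or circle in $S$ carries at most $n-k$ points, and since lines (resp.\ circles through $p$) in the $S'$-plane pull back under $Inv_p$ to lines through $p$ (resp.\ circles through $p$) in the $S$-plane, at most $n-k-1$ points of $S'$ lie on any line. So by Lemma~\ref{lem:circ-3-4}, provided $n \geqslant 72k^2 + 2k$, the number of 2- or 3-point lines of $S'$ is at least $k(n-k-1) - k(k-1) = k(n-2k)$, and after discarding the at most $(n-1)/2$ of them that pass through $p$, at least $k(n-2k) - (n-1)/2$ such short lines of $S'$ avoid $p$.

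Next I would translate each of these lines back to a circle. A 2- or 3-point line of $S'$ not through $p$ inverts to a circle through $p$; together with $p$ itself, it carries exactly 3 or 4 points of $S$. Distinct lines in $S'$ give distinct circles in $S$, so for this fixed $p$ we obtain at least $k(n-2k) - (n-1)/2$ circles of size 3 or 4 through $p$.

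The last step is a double counting. Summing the above bound over all $p \in S$ and letting $c_3, c_4$ denote the number of 3- and 4-point circles of $S$, every 3-point circle is counted exactly 3 times and every 4-point circle exactly 4 times, so
\[
3 c_3 + 4 c_4 \;\geqslant\; n \left[ k(n-2k) - \tfrac{n-1}{2} \right].
\]
Since $4(c_3 + c_4) \geqslant 3 c_3 + 4 c_4$, the total number of circles is at least
\[
\frac{n}{8}\bigl[ 2k(n-2k) - (n-1) \bigr] \;=\; \frac{n}{8}\bigl[ (2k-1)n - (2k-1)(2k+1) \bigr] \;=\; \frac{(2k-1)\bigl(n^2 - (2k+1)n\bigr)}{8},
\]
which is the claimed bound.

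I do not expect any step to be difficult; the only thing to watch is the bookkeeping under $Inv_p$: one must be sure that ``2- or 3-point lines of $S'$ not through $p$'' correspond bijectively to ``3- or 4-point circles of $S$ through $p$,'' using that lines through the center of inversion pull back to themselves while lines missing the center pull back to circles through the center. Once that correspondence is in hand, the rest is invoking Lemma~\ref{lem:circ-3-4} and the elementary averaging above.
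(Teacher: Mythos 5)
Your proof is correct and follows essentially the same route as the paper: invert about each $p\in S$, apply Lemma~\ref{lem:circ-3-4} to get at least $k(n-k-1)-k(k-1)-\frac{n-1}{2}$ short lines of $S'$ avoiding $p$ (hence that many $3$- or $4$-point circles through $p$), then average over all $n$ choices of $p$ with each circle counted at most four times. The only cosmetic difference is that the paper first disposes of the configuration with $n-1$ concyclic or collinear points before assuming $k\geqslant 2$, whereas your uniform computation handles $k=1$ directly; both yield the stated bound.
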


Let $c_r$ be the number of circles incident to $r\geqslant3$ points in $S$.
We denote by $c_r^{(p)}$ the number of circles in $S$ incident to $r$ points, one of which is $p$.
\begin{proof}
This corollary is true when there exists a circle or line incident to $n-1$ points (see the exceptional case for Elliot's result above), 
so assume $k\geqslant2$.
Let $S'=Inv_p(S\backslash\{p\})$, for some arbitrary $p\in S$.
By Lemma \ref{lem:circ-3-4}, at least $k(n-k-1)-k(k-1)$ determined lines in $S'$ are incident to at most three points.
Since at most $\frac{n-1}{2}$ of these lines are incident to $p$ in $S'\cup\{p\}$,
it must be the case that $c_3^{(p)}+c_4^{(p)}\geqslant k(n-k-1)-k(k-1)-(n-1)/2$. 
By repeating this argument for all $n$ points in $S$, each circle is counted at most four times.
Thus,
\begin{multline*}
c_3+c_4\geqslant\frac{n}{4}\left(k(n-k-1)-k(k-1)-\frac{n-1}{2}\right)\\
=\frac{1}{8}(2k-1)(n^2-(2k+1)n).
\end{multline*}

\end{proof}

\section{Planes Determined by Points in $\mathbb{R}^3$}
\label{sec-planes}
Let $S=\{p_1,p_2,\ldots,p_n\}$, a set of $n$ points in $\mathbb{R}^3$, no three collinear and not all coplanar. 

One method for determining the number of planes determined by a set of points in $\mathbb{R}^3$ 
is to take an arbitrary point, $p \in S$, 
and project from p onto a plane $\pi$, the other points of $S$.
The lines determined (by the projected points) on $\pi$ correlate to the planes determined by the point set.

\begin{defn}
Let $L$ be the set of $\binom{n}{2}$ lines determined by points of $S$.
Let $\pi$ be any plane which intersects every line in $L$, but does not contain any point of $S$.
Let $p_i\in S$ be the point from which we project, and let $l_{i,j}$ be the line connecting point $p_i$ with another point $p_j\in S$. 
The \emph{projection} of point $p_j$ on $\pi$ is the point of intersection of $l_{i,j}$ with $\pi$.
Since no three points are collinear, this projection of $p_j$ is unique with respect to $p_i$.
\end{defn}
Although no three points are collinear in $S$, there might exist three points whose projections onto $\pi$ are collinear.
Also, the point from which we project is not itself projected; So a set of $n$ points, will produce $n-1$ points projected onto $\pi$.

\begin{defn}
\label{def-lines-pi}
Let $t_k(p)$ be the number of lines determined on $\pi$, when projecting from point $p$, containing exactly $k$ $(\geqslant2)$ points.
Let $t(p)=\sum_{k\geqslant2}t_k(p)$.
\end{defn}
\begin{defn}
Let $m_k(p)$ be the number of planes containing exactly $k$ $(\geqslant3)$ points, one of which is point $p$.
Let $m(p)=\sum_{k\geqslant3}m_k(p)$.
\end{defn}

\begin{lem}
\label{lem-lines-to-planes}
For any point $p \in S$,
\begin{equation}
\label{eqn:lines-to-planes}
t_k(p) = m_{k+1}(p).
\end{equation}
\end{lem}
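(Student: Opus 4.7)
The plan is to establish a count-preserving bijection between the planes through $p$ containing at least three points of $S$ and the lines on $\pi$ containing at least two projected points. Concretely, I would define the map $\Phi$ which sends a plane $\Pi$ through $p$ to the line $\Pi \cap \pi$. This is well-defined precisely because $\pi$ was chosen to avoid $S$, so $p \notin \pi$ and hence $\Pi \neq \pi$; two distinct planes through $p$ must meet $\pi$ in distinct lines, so $\Phi$ is injective. For surjectivity, given any line $\ell$ on $\pi$ carrying at least two projected points, the span of $\{p\} \cup \ell$ is a plane $\Pi$ through $p$, and $\Phi(\Pi) = \ell$.

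Next I would verify that $\Phi$ sends a plane containing $p$ and exactly $k$ other points of $S$ to a line containing exactly $k$ projections. The forward inclusion is easy: if $q_1, \ldots, q_k \in S$ lie in a plane $\Pi$ with $p$, then each line $p q_i$ lies in $\Pi$, so its intersection with $\pi$ — which is by definition the projection of $q_i$ — lies in $\Pi \cap \pi = \Phi(\Pi)$. For the reverse inclusion, suppose a projected point $q'$ of some $q \in S \setminus \{p\}$ lies on $\Phi(\Pi)$. Then the line $pq$ meets $\Phi(\Pi) \subset \Pi$ at $q'$, and since $p \in \Pi$ as well, the entire line $pq$ lies in $\Pi$, so $q \in \Pi$. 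Thus the points of $S$ projecting onto $\Phi(\Pi)$ are exactly the points of $S \setminus \{p\}$ lying in $\Pi$.

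There is one subtlety to address: the projections of distinct points of $S \setminus \{p\}$ must themselves be distinct, since otherwise the equation $k = |\{\text{projections on }\Phi(\Pi)\}|$ would undercount. This is exactly where the no-three-collinear hypothesis on $S$ is used — if two points $q, q' \in S \setminus \{p\}$ had the same projection, then $p, q, q'$ would be collinear in $\mathbb{R}^3$, a contradiction. Once this is in hand, the bijection $\Phi$ matches planes through $p$ incident to exactly $k+1$ points of $S$ with lines on $\pi$ incident to exactly $k$ projections, yielding $m_{k+1}(p) = t_k(p)$.

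I do not expect a serious obstacle here; the only thing to be careful about is confirming that the general position of $\pi$ (avoiding $S$ and meeting every line of $L$) together with the no-three-collinear hypothesis is precisely what makes both directions of the incidence correspondence work cleanly.
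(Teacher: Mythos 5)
Your proof is correct and takes essentially the same route as the paper: both establish the correspondence between $(k+1)$-point planes through $p$ and $k$-point lines of projections on $\pi$, with the no-three-collinear hypothesis guaranteeing that distinct points of $S\setminus\{p\}$ have distinct projections so the counts match. Your write-up is simply a more explicit version (naming the map $\Phi$ and checking injectivity and surjectivity separately) of the paper's argument.
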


\begin{proof}
Let $p_n\in S$ be an arbitrary point from which we project the others onto a plane $\pi$.
Let $s_i$ and $s_j$ on $\pi$ be the projections of distinct points $p_i\in S$ and $p_j\in S$, respectively.
Let $l$ be the line determined by $s_i$ and $s_j$.
The points $p_i$, $p_j$ and $p_n$ determine a plane, $P$, which contains line $l$.
Any other point whose projection lies on $l$ is incident to $P$.
Since each point in $S-\{p_n\}$ has a unique projection onto $\pi$, the identity follows.
\end{proof}

This identity, and derivations thereof, will be utilized throughout this section to obtain our primary results.

\subsection{A Derivation from Melchior's Inequality}
Let $t_k$ be the number of lines determined by points in a plane containing exactly $k$ points.

We again use Melchior's Inequality \cite{Mel41},
\[
-3 \geqslant \sum_{k\geqslant2}(k-3)t_k.
\]

Applying this inequality to the projection onto $\pi$ from point $p_1 \in S$, along with the identity (\ref{eqn:lines-to-planes}), we see that
\[
-3 \geqslant \sum_{k\geqslant2}(k-3)t_k(p_1)=\sum_{k\geqslant3}(k-4)m_k(p_1).
\]

Since this inequality is true for whichever point we choose, we may extend this by summing over all $n$ points in $S$,
\[
-3n \geqslant \sum_{i=1}^n\sum_{k\geqslant3}(k-4)m_k(p_i)
\]

Let $m_k$ be the number of planes determined by points of $S$ containing exactly $k$ points.
Each $k$-plane is counted exactly $k$ times in this summation, yielding
\[
-3n \geqslant (-1)(3)m_3+(0)(4)m_4+(1)(5)m_5+(2)(6)m_6+\ldots.
\]

From this we get the following theorem:
\begin{thm}
\label{thm-melchior-sum}
Let $S$ be a set of $n$ points in $\mathbb{R}^3$, no three collinear and not all coplanar. 
Let $m_k$ be the number of planes determined by points of $S$ containing exactly $k$ points.
It must be the case that
\[
-3n \geqslant \sum_{k\geqslant3}k(k-4)m_k.
\]
\end{thm}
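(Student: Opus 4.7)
The plan is to iterate the projection argument introduced in Lemma \ref{lem-lines-to-planes} over every point of $S$ and then combine with Melchior's Inequality, exactly along the lines already sketched in the paragraphs preceding the theorem statement. First I would fix an arbitrary $p_i\in S$, choose a generic plane $\pi$ as in Definition \ref{def-lines-pi}, and project the remaining $n-1$ points onto $\pi$. Since $S$ is not coplanar, the $n-1$ points projected from $p_i$ are not all collinear on $\pi$ (otherwise they, together with $p_i$, would lie in a common plane), so Melchior's Inequality applies to the projected planar configuration, yielding $-3\geqslant\sum_{k\geqslant2}(k-3)t_k(p_i)$.

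Next I would substitute the identity $t_k(p_i)=m_{k+1}(p_i)$ from Lemma \ref{lem-lines-to-planes} and shift the index, obtaining
\[
-3\geqslant\sum_{k\geqslant3}(k-4)m_k(p_i).
\]
Summing this inequality over all $i=1,\ldots,n$ gives $-3n\geqslant\sum_{i=1}^n\sum_{k\geqslant3}(k-4)m_k(p_i)$. The final step is the double count: for fixed $k\geqslant3$, the sum $\sum_{i=1}^n m_k(p_i)$ tallies incidences between points of $S$ and $k$-rich planes, so each $k$-rich plane is counted exactly $k$ times. Interchanging the order of summation and substituting $\sum_{i=1}^n m_k(p_i)=k\,m_k$ produces the desired inequality.

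The argument is essentially mechanical once the projection lemma is in hand; the only subtle point is verifying that Melchior's hypothesis (not all points collinear) is satisfied for the projection from each $p_i$, which follows directly from the assumption that $S$ is not coplanar. The $k(k-4)$ factor on the right-hand side is the product of the Melchior weight $(k-4)$ (after the index shift $k\mapsto k+1$) and the incidence multiplicity $k$ from the double count.
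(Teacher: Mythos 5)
Your proposal follows exactly the paper's own derivation: Melchior's Inequality applied to the projection from each $p_i$, the substitution $t_k(p_i)=m_{k+1}(p_i)$ from Lemma \ref{lem-lines-to-planes}, summation over all $n$ points, and the double count giving the factor of $k$. The argument is correct and essentially identical to the one given in the text preceding the theorem.
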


By separating the $m_3$ term from the summation and reordering the inequality, one can see our first corollary:
\begin{cor}
\label{cor:3-pt-planes}
Given a set of $n$ points in $\mathbb{R}^3$, no three collinear and not all coplanar, 
there exists at least $n$ planes containing exactly three points.
More specifically,
\[
m_3 \geqslant n + \sum_{k\geqslant4}\frac{k(k-4)}{3}m_k.
\]
\end{cor}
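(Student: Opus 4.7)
The plan is to derive this corollary directly from Theorem \ref{thm-melchior-sum}, which states
\[
-3n \geqslant \sum_{k\geqslant3} k(k-4)m_k.
\]
The only work to do is bookkeeping: isolate the $k=3$ term on the right-hand side and rearrange.

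First I would split off the $k=3$ term, which contributes $3(3-4)m_3 = -3m_3$, so the inequality becomes
\[
-3n \geqslant -3m_3 + \sum_{k\geqslant4} k(k-4)m_k.
\]
Moving $-3m_3$ to the left and $-3n$ to the right, then dividing by $3$, yields exactly the displayed inequality
\[
m_3 \geqslant n + \sum_{k\geqslant4} \frac{k(k-4)}{3}m_k.
\]
To conclude the qualitative statement that $m_3 \geqslant n$, I would observe that for every integer $k \geqslant 4$ the coefficient $k(k-4)$ is nonnegative and $m_k \geqslant 0$, so the entire sum on the right is nonnegative and can be dropped.

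There is essentially no obstacle here: the content of the result is already packaged into Theorem \ref{thm-melchior-sum}, and the corollary is just a rearrangement together with the sign observation on $k(k-4)$ for $k\geqslant 4$. The only thing one must be slightly careful about is that the hypothesis ``no three collinear and not all coplanar'' is inherited from Theorem \ref{thm-melchior-sum}; it is what legitimizes the application of Melchior's inequality in each projection (so that the projected point sets are noncollinear configurations to which Melchior applies), and this was already handled in the derivation of that theorem.
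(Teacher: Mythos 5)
Your proposal is correct and matches the paper's own argument, which likewise obtains the corollary by separating the $m_3$ term from the summation in Theorem \ref{thm-melchior-sum} and reordering. The additional observation that $k(k-4)\geqslant 0$ for $k\geqslant 4$ (so the sum can be dropped to get $m_3\geqslant n$) is exactly the intended reading of the qualitative statement.
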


From Theorem \ref{thm-melchior-sum}, one may also derive a bound for the number of planes determined by at most four points.
The inequality from Theorem \ref{thm-melchior-sum} can be rewritten, 
\[
3n \leqslant 3m_3+0m_4-5m_5-12m_6-\ldots.
\]

Let $m$ be the total number of determined planes. By adding $5m$ to both sides, we see that
\[
5m+3n\leqslant 8m_3+5m_4+0m_5-7m_6-\ldots,
\]
or
\[
5m+3n\leqslant 8(m_3+m_4).
\]

From this we get our next corollary:
\begin{cor}
\label{cor:lb-34-pt-planes}
Let $S$ be a set of $n$ points in $\mathbb{R}^3$, no three collinear and not all coplanar. 
Let $m$ be the total number of planes determined by $S$.
The number of planes determined by $S$ containing at most four points is more than 
five-eighths of the total number of determined planes.
More specifically,
\[
	m_3+m_4\geqslant\frac{1}{8}(5m+3n).
\]
\end{cor}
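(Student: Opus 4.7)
The plan is to deduce this corollary directly from Theorem \ref{thm-melchior-sum}, whose inequality already expresses a weighted sum of the $m_k$ in a form that favors small $k$. The strategy is to take a suitable non-negative combination of the Melchior-sum inequality and the trivial identity $m = \sum_{k \geq 3} m_k$, chosen so that every coefficient on the right-hand side for $k \geq 5$ becomes non-positive, while the coefficients on $m_3$ and $m_4$ become positive and equal (so that they collapse to a multiple of $m_3 + m_4$).

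Concretely, I would first rewrite Theorem \ref{thm-melchior-sum} in the form
\[
3n \;\leq\; 3m_3 + 0\cdot m_4 - 5 m_5 - 12 m_6 - \ldots \;=\; -\sum_{k\geq 3} k(k-4) m_k,
\]
and then add $5m = 5\sum_{k \geq 3} m_k$ to both sides. The coefficient of $m_k$ on the right becomes $5 - k(k-4) = -(k-5)(k+1)$, which is non-positive for every $k \geq 5$, equals $5$ at $k=4$, and equals $8$ at $k=3$. Hence
\[
5m + 3n \;\leq\; 8 m_3 + 5 m_4 + \sum_{k\geq 5}\bigl(-(k-5)(k+1)\bigr) m_k \;\leq\; 8 m_3 + 5 m_4 \;\leq\; 8(m_3 + m_4),
\]
and dividing by $8$ gives the desired bound.

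There is no real obstacle here: the only thing to verify is the sign pattern of the coefficients $5 - k(k-4)$ for $k \geq 5$, which factors cleanly as $-(k-5)(k+1)$ and is manifestly non-positive. The choice of multiplier $5$ is forced by the requirement that the coefficient of $m_5$ vanish (so that no $m_5$ contribution is needed on the right), which is what makes the ``$m_3 + m_4$'' aggregation possible with matching weights. Had one added $cm$ for some $c < 5$, the coefficient of $m_5$ would remain strictly negative and the bound would be weaker; for $c > 5$ the coefficient of $m_5$ becomes positive but $m_5$ would no longer be bounded trivially by $m_3 + m_4$, so $c = 5$ is the sharp choice for this particular aggregation.
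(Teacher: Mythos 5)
Your proposal is correct and is essentially identical to the paper's own proof: both rewrite Theorem \ref{thm-melchior-sum} as $3n \leqslant 3m_3 + 0m_4 - 5m_5 - 12m_6 - \ldots$, add $5m$ to both sides so that the coefficient of $m_k$ becomes $5 - k(k-4)$ (nonpositive for $k \geqslant 5$), and bound $8m_3 + 5m_4$ by $8(m_3+m_4)$. Your explicit factorization $-(k-5)(k+1)$ and the remark on why $c=5$ is the sharp multiplier are nice additions, but the argument is the same.
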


\subsection{Planes Incident to Exactly Three Points}

In \cite{CS93}, Csima and Sawyer published their well-known result that among any set of $n\neq7$ points in a plane, not all collinear,
there exist $\geqslant\frac{6n}{13}$ lines incident to exactly two points.  We will now use this result to obtain
an analogous result for the number of planes determined by exactly three points in 3-space.

Let $S$ be a set of $n$ points in $\mathbb{R}^3$, no three collinear and not all coplanar. 
Let $p_1\in S$ be the point from which we project the others onto $\pi$.

Since the points are not all coplanar, their $n-1$ projections onto $\pi$ are not all collinear. Thus, Csima and Sawyer's 
result can be applied:
\[
m_3(p_1) = t_2(p_1) \geqslant \frac{6(n-1)}{13}
\]

By summing the above inequality for all $n$ points, we would count each three-point plane three times.  
By taking one-third of that total, we arrive at the following:
\begin{multline*}
m_3 = \frac{1}{3}(m_3(p_1)+m_3(p_2)+\ldots+m_3(p_n)) \geqslant \frac{2n(n-1)}{13} = \frac{4}{13}\binom{n}{2}
\end{multline*}

\begin{thm}
\label{thm-3-pt-planes}
Let $S$ be a set of $n$ points in $\mathbb{R}^3$, no three collinear and not all coplanar.
There exists at least $\frac{4}{13}\binom{n}{2}$ planes determined by exactly three points.
\end{thm}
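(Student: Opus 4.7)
The plan is to reduce the count of three-point planes in $\mathbb{R}^3$ to a count of ordinary lines in the plane by projection, and then average over the choice of projection point.

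First, I would fix an arbitrary $p_i \in S$ and project the other $n-1$ points from $p_i$ onto an auxiliary plane $\pi$ (chosen as in the definition preceding Lemma \ref{lem-lines-to-planes}). Since $S$ is not coplanar, the $n-1$ projected points cannot all be collinear on $\pi$: a collinear image would force every point of $S$ to lie in the plane spanned by $p_i$ and that line. Hence the Csima-Sawyer theorem applies to the projection (at least when $n - 1 \neq 7$), yielding
\[
t_2(p_i) \geqslant \frac{6(n-1)}{13}.
\]

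Next, I would apply Lemma \ref{lem-lines-to-planes} with $k = 2$ to translate this into a count of planes: two-point lines of the projection correspond bijectively to three-point planes through $p_i$, so $m_3(p_i) = t_2(p_i) \geqslant 6(n-1)/13$. Summing over all $n$ choices of projection point and noting that each three-point plane contains exactly three points of $S$ (so contributes to $m_3(p_i)$ for exactly three values of $i$), we get
\[
3 m_3 = \sum_{i=1}^n m_3(p_i) \geqslant \frac{6n(n-1)}{13},
\]
which divided by $3$ yields the claimed $m_3 \geqslant \frac{4}{13}\binom{n}{2}$.

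The argument is essentially mechanical once Lemma \ref{lem-lines-to-planes} is in hand, so there is no real obstacle; the most delicate issue is the exceptional case $n = 8$ (projection size $7$) excluded by Csima-Sawyer, which can be addressed by a separate check or by substituting the weaker Kelly-Moser ordinary-line bound for that single value.
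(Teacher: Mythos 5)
Your proposal is correct and follows essentially the same route as the paper: project from each point, apply Csima--Sawyer to the non-collinear image, use $m_3(p_i)=t_2(p_i)$, sum over all $n$ projection centers, and divide by $3$. Your extra care about the excluded case $n-1=7$ is a detail the paper's own proof silently passes over, but it does not change the argument.
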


\subsection{Total Number of Determined Planes}

The following lemma is an extension of a result of Erd\H{o}s and Purdy (called ``Lemma 2'') in \cite{EP77}.

\begin{lem}
\label{lem-lb-planes}
Let $S$ be a set of $n$ points in $\mathbb{R}^3$, not all coplanar and no three collinear.
Let $m$ be the total number of planes determined by $S$.
If exactly $n-k$ of the points are coplanar, then
\[
m \geqslant 1 + k \binom{n-k}{2}-\binom{k}{2}\left(\frac{n-k}{2}\right).
\]
\end{lem}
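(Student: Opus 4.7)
The plan is to separate $S$ into the coplanar subset and its complement and then to lower bound the number of determined planes by a careful counting over pairs. Writing $\pi_0$ for the plane containing the $n-k$ coplanar points, let $A$ be those points and $B = S \setminus A$ the remaining $k$ points, which lie off $\pi_0$ by hypothesis. The plane $\pi_0$ itself accounts for the ``$1$'' in the bound, and the rest of the proof must produce at least $k\binom{n-k}{2} - \binom{k}{2}(n-k)/2$ additional planes.

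The core count I would use is that for each $b \in B$ and each pair $\{a_i, a_j\} \subset A$, the three points are non-collinear and determine a plane $P \neq \pi_0$. Since no three points of $S$ are collinear, any line in $\pi_0$ meets $A$ in at most two points, so $P \cap \pi_0$ is a line carrying \emph{exactly} two $A$-points. Letting $b_P$ denote the number of $B$-points of $S$ lying in such a plane $P$, the total count of $(A\text{-pair},\,B\text{-point})$ incidences equals $\sum_P b_P = k\binom{n-k}{2}$, where the sum runs over the distinct planes $P \neq \pi_0$ containing exactly two $A$-points and at least one $B$-point. The number of such distinct planes is thus $k\binom{n-k}{2} - \sum_P (b_P - 1)$, so the task reduces to showing $\sum_P (b_P - 1) \leqslant \binom{k}{2}(n-k)/2$.

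To control the overcount I would pass to $b_P - 1 \leqslant \binom{b_P}{2}$ and reinterpret $\sum_P \binom{b_P}{2}$ as counting configurations (plane $P$ with two $A$-points, pair $\{b_i,b_j\} \subset P$). Fixing a pair $\{b_i, b_j\} \subset B$, every such plane meets $\pi_0$ in a line coplanar with $\overline{b_i b_j}$; this line either passes through the point $q = \overline{b_i b_j} \cap \pi_0$ (when that intersection exists) or lies in the fixed direction parallel to $\overline{b_i b_j}$. In either case, the no-three-collinear hypothesis forces any two such lines to be $A$-disjoint, so their supporting $A$-pairs partition a subset of $A$ and number at most $\lfloor (n-k)/2 \rfloor$.

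The main obstacle I anticipate is the degenerate case in which $\overline{b_i b_j}$ meets $\pi_0$ at a point $q \in A$, for then lines through $q$ carrying a second $A$-point could a priori number up to $n-k-1$ and wreck the bound. The resolution is that this case cannot occur: if $q \in A \subseteq S$, then $b_i, b_j, q$ are three collinear points of $S$, contradicting the hypothesis. Once this point is handled, summing over the $\binom{k}{2}$ pairs in $B$ gives $\sum_P \binom{b_P}{2} \leqslant \binom{k}{2}(n-k)/2$, and restoring the contribution of $\pi_0$ yields the claimed lower bound on $m$.
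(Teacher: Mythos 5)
Your proposal is correct and follows essentially the same route as the paper: count the $k\binom{n-k}{2}$ triples formed by a point off the plane and a pair on it, then bound the overcount for each pair $\{b_i,b_j\}$ off the plane by observing that the relevant lines in $\pi_0$ all pass through $\overline{b_ib_j}\cap\pi_0\notin S$ (or are parallel), hence carry pairwise disjoint $A$-pairs, giving at most $\lfloor (n-k)/2\rfloor$ coincidences per pair. Your writeup is in fact somewhat more careful than the paper's, which handles $k\geqslant 3$ by saying ``following the same argument as $k=2$'' and leaves the $\sum_P(b_P-1)\leqslant\sum_P\binom{b_P}{2}$ accounting implicit.
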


\begin{proof}
\item{Case $k=1$:}
Let $S$ be the set of $n$ points. Let $M$ be the plane containing $n-1$ points, and $p$ the point not on $M$.  
Each pair of points on $M$, along with $p$, determine a three-point plane.  The relation is true with equality.
\item{Case $k=2$:}
Let $p$ and $q$ be the two points not on $M$. The line $\overline{pq}$ intersects $M$ at a point $r\notin S$.
There can be at most $\lfloor\frac{n-2}{2}\rfloor$ pairs that determine a line through $r$, and therefore at most $\lfloor\frac{n-2}{2}\rfloor$
pairs that determine a plane with $p$ that is incident to $q$ (or vice versa).  Therefore, the number of planes 
determined is at least $1 + 2 \binom{n-2}{2}-\left\lfloor\frac{n-2}{2}\right\rfloor$.
\item{Case $k\geqslant3$:}
Following the same argument as the case for $k=2$, since there are $\binom{k}{2}$ pairs of points not on $M$, the number of planes 
determined is at least $1 + k \binom{n-k}{2}-\binom{k}{2}\left\lfloor\frac{n-k}{2}\right\rfloor$.
\end{proof}

Before we begin the primary result of this section, we will also need the following lemma.
\begin{lem}
\label{lem-total-planes}
Let $m_k$ be the number of planes incident to exactly $k$ points.  
Let $m$ be the total number planes determined by a point set.  
Given a set of $n$ points, no three collinear and not all coplanar, 
\[
6m \geqslant 3n+\sum_{k\geqslant3}\binom{k}{2}m_k
\]
\end{lem}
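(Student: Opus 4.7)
The plan is to apply Kelly--Moser's inequality (Lemma \ref{lem:pt-line-incidences}) to the planar projection of $S$ from each of its own points, sum over all projection centers, and absorb the remaining slack via an elementary convexity term.

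First I fix $p \in S$ and project the other $n-1$ points onto a plane $\pi$ as in the setup preceding Lemma \ref{lem-lines-to-planes}. These projections cannot be collinear: if they all lay on a single line $\ell \subset \pi$, every point of $S \setminus \{p\}$ would lie in the plane spanned by $p$ and $\ell$, making $S$ coplanar and contradicting the hypothesis. Hence Lemma \ref{lem:pt-line-incidences} applies to the $n-1$ projected points and gives $3\,t(p) - 3 \geqslant \sum_{k\geqslant 2} k\, t_k(p)$. Using Lemma \ref{lem-lines-to-planes} to rewrite $t_k(p)$ as $m_{k+1}(p)$ (and $t(p)$ as $m(p)$), this becomes
\[
3\,m(p) - 3 \;\geqslant\; \sum_{j\geqslant 3}(j-1)\, m_j(p).
\]

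Next I sum over $p \in S$. Since a plane containing exactly $j$ points of $S$ is counted in $m_j(p)$ for precisely the $j$ points lying on it, $\sum_p m_j(p) = j\,m_j$, and therefore $\sum_p m(p) = \sum_j j\,m_j$. The sum of the inequality above becomes
\[
3\sum_{j\geqslant 3} j\,m_j \;\geqslant\; 3n + \sum_{j\geqslant 3} j(j-1)\,m_j. \qquad (\ast)
\]

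To convert the left-hand side of $(\ast)$ into a multiple of $m = \sum_j m_j$, I invoke the elementary fact that $(j-3)(j-4)\geqslant 0$ for every integer $j\geqslant 3$. Weighting by $m_j$, summing, and using $j^2 = j(j-1)+j$ yields
\[
12\,m + \sum_{j\geqslant 3} j(j-1)\,m_j \;\geqslant\; 6 \sum_{j\geqslant 3} j\,m_j.
\]
Chaining this with twice $(\ast)$ cancels $\sum_j j(j-1)\,m_j$ once on each side and leaves $12\,m \geqslant 6n + \sum_j j(j-1)\,m_j$, which is precisely the desired bound after dividing by $2$.

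The main obstacle is essentially bookkeeping: one has to notice that Kelly--Moser alone is not tight enough, and that the missing slack is supplied exactly by the trivial inequality $(j-3)(j-4) \geqslant 0$ (both inequalities degenerate to equality on the tetrahedron, where the lemma is tight, providing a useful sanity check). The only nontrivial geometric content is the remark that non-coplanarity of $S$ prevents each projected configuration from collapsing to a line, so that Lemma \ref{lem:pt-line-incidences} is legitimately applicable at every projection center.
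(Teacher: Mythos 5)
Your proof is correct and, despite the different packaging, is essentially the paper's own argument: summing Kelly--Moser (Lemma \ref{lem:pt-line-incidences}) over all projection centres reproduces exactly the paper's inequality (\ref{eqn-exp-mel-pairs}) (a rearrangement of Theorem \ref{thm-melchior-sum}), and your trivial bound $(j-3)(j-4)\geqslant 0$ is precisely the step the paper performs implicitly when it replaces $\sum_{k}(7k-k^{2})m_{k}$ by $12(m_3+m_4+m_5+m_6)\leqslant 12m$. The observation that non-coplanarity keeps each projected configuration from being collinear, and the tetrahedron equality check, are correct but do not change the fact that the two derivations coincide in substance.
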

\begin{proof}
Theorem \ref{thm-melchior-sum} states the following,
\begin{equation}
\label{eqn-mel-pairs}
-3n \geqslant \sum_{k\geqslant3}(k^2-4k)m_k = \sum_{k\geqslant3}(k^2-k)m_k - 3\cdot\sum_{k\geqslant3}k\cdot m_k
\end{equation}
By negating this inequality one gets, 
\begin{equation}
\label{eqn-neg-mel-pairs}
3m_3+0m_4-5m_5-12m_6-21m_7-32m_8\ldots\geqslant3n
\end{equation}
Similarly,  one can unwind the summation from (\ref{eqn-mel-pairs}) to get,
\begin{multline}
\label{eqn-exp-mel-pairs}
9m_3 + 12m_4+15m_5+18m_6+21m_7+24m_8+\ldots\\ \geqslant 3n+\sum_{k\geqslant3}(k^2-k)m_k
\end{multline}
Adding (\ref{eqn-neg-mel-pairs}) to (\ref{eqn-exp-mel-pairs}) produces,
\[
12m_3+12m_4+10m_5+6m_6\geqslant6n+\sum_{k\geqslant3}(k^2-k)m_k
\]
Therefore,
\[
12m\geqslant12(m_3+m_4+m_5+m_6)\geqslant6n+\sum_{k\geqslant3}(k^2-k)m_k
\]
Dividing the inequality by two produces the lemma.
\end{proof}

This leads us to the following theorem, which is a generalization of the Kelly-Moser Theorem (called ``Theorem 4.1'' in \cite{KM58}) to three dimensions:
\begin{thm}
\label{thm-total-planes}
Let $S$ be a set of $n$ points in $\mathbb{R}^3$, no three collinear and at most $n-k$ coplanar.
If $n\geqslant g(k)\buildrel\rm def\over= 54k^2+\frac{9}{2}k$, the total number of planes determined 
by $S$ is at least $1 + k \binom{n-k}{2}-\binom{k}{2}\left(\frac{n-k}{2}\right)$.
\end{thm}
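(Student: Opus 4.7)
My plan is to imitate the two-case structure of the proof of Theorem~\ref{thm:2-3-pt-lines}, replacing ``line-degree'' with ``plane-degree.'' For each $p \in S$, let $m(p)$ be the number of determined planes through $p$; by Lemma~\ref{lem-lines-to-planes}, $m(p) = t(p)$ equals the number of lines determined on $\pi$ when one projects $S \setminus \{p\}$ from $p$. In the 3D argument I would use Lemma~\ref{lem-lb-planes} in place of Lemma~\ref{lem:lb-ol} and Lemma~\ref{lem-total-planes} in place of Lemma~\ref{lem:pt-line-incidences}. Fix a threshold $T$ proportional to $kn$ (playing the role of the constant $6k$ used for line-degree in $\mathbb{R}^2$) and split accordingly.

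Case~1: there exist two points $p,q \in S$ with $m(p), m(q) < T$. Projecting from $p$ yields an $(n-1)$-point set in $\mathbb{R}^2$ determining only $t(p) < T$ lines; by a Kelly--Moser-style near-collinearity estimate (derivable from Lemma~\ref{lem:lb-ol}), $t(p)$ this small forces $n-1-k^*_p$ of the projected points to be collinear for some $k^*_p$ of order $O(k)$. Pulling this back to $\mathbb{R}^3$, some plane through $p$ contains all but $k^*_p$ points of $S$; the same conclusion holds for $q$. If the two ``big'' planes through $p$ and $q$ were distinct, their line of intersection would meet $S$ in at most two points (no three collinear), forcing $n \leqslant O(k)$ and contradicting the hypothesis $n \geqslant 54k^2 + \tfrac{9}{2}k$. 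Hence there is a single plane $P^*$ with $|P^* \cap S| = n - k^*$ for some $k^* \in [k, O(k)]$. Applying Lemma~\ref{lem-lb-planes} with parameter $k^*$, and noting from the closed form $h(k^*) := k^*(n-k^*)(2n-3k^*-1)/4$ that $h$ is monotone increasing in $k^*$ whenever $k^* \ll n$---which the hypothesis $n \geqslant 54k^2 + \tfrac{9}{2}k$ is tuned to guarantee---gives $m \geqslant 1 + h(k^*) \geqslant 1 + h(k)$, the desired bound.

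Case~2: at least $n-1$ points satisfy $m(p) \geqslant T$. Summing yields $\sum_p m(p) = \sum_{k \geqslant 3} k\, m_k \geqslant (n-1)T$, and since $\binom{k}{2} \geqslant k$ for $k \geqslant 3$, also $\sum_{k \geqslant 3}\binom{k}{2} m_k \geqslant (n-1)T$. Feeding this into Lemma~\ref{lem-total-planes} produces $6m \geqslant 3n + (n-1)T$, which for $T$ of order $3kn$ exceeds the target value $1 + k\binom{n-k}{2} - \binom{k}{2}(n-k)/2$ whenever $n \geqslant g(k)$.

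The principal obstacle is calibrating the threshold $T$ together with the resulting $O(k)$ bound on $k^*$ tightly enough that the given hypothesis $n \geqslant 54k^2 + \tfrac{9}{2}k$ simultaneously suffices for the ``two distinct big planes'' contradiction in Case~1, the monotonicity of $h(k^*)$ on $[k, O(k)]$, and the quadratic inequality needed in Case~2. Note that blindly applying Theorem~\ref{thm:2-3-pt-lines} itself to the projected 2D set would require $n \geqslant 72k^2 + 2k$, so using the sharper Kelly--Moser line bound directly on the projection (rather than the full 2D theorem) is what should allow the smaller constant $54k^2$.
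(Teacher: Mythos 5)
Your outline diverges from the paper's proof in its basic decomposition: the paper measures the degree of \emph{pairs} of points (the number of determined planes containing both), not of single points, and that choice is what keeps the argument self-contained. In the paper's Case 1, two low-degree pairs $\{p,q\}$ and $\{p,r\}$ sharing a point give, by an elementary intersection count (a plane through $\{p,q\}$ and a plane through $\{p,r\}$ meet in a line through $p$, hence share at most one further point of $S$), a plane $M$ through $p,q,r$ missing fewer than $36k^2$ points of $S$; Lemma \ref{lem-lb-planes} and the monotonicity of $f$ on $[k,36k^2]$ --- this is where $n\geqslant 54k^2+\frac{9}{2}k$ is actually used --- then finish. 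In its Case 2, the pair-degree sum is literally $\sum_j\binom{j}{2}m_j$, which plugs directly into Lemma \ref{lem-total-planes} with no slack.

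The genuine gap in your version is the structural step of your Case 1: the claim that $t(p)<T$ with $T\sim kn$ forces all but $O(k)$ of the projected points to be collinear, and that this is ``derivable from Lemma \ref{lem:lb-ol}.'' It is not. Lemma \ref{lem:lb-ol} runs in the wrong direction: it lower-bounds $t_2$ \emph{given} a line already containing $r$ of the points, so it says nothing about a configuration with no large collinear subset at all (for such a set the lemma can only be applied with small $r$ and yields a vacuous bound). The statement you need --- few lines implies nearly collinear --- is precisely Kelly--Moser's Theorem 4.1 (in contrapositive form), a substantially harder external result resting on their $t_2\geqslant 3n/7$ bound, valid only for $n$ large relative to the defect, and neither proved nor needed in this paper. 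If you do import it, the rest of your plan (two distinct near-covering planes share at most two points of $S$, forcing $n=O(k)$, so they coincide; monotonicity of $h$ on $[k,O(k)]$; the Case 2 count via $\binom{j}{2}\geqslant j$) does go through, and your Case 1 would then need only $n=\Omega(k)$ for the monotonicity step rather than $\Omega(k^2)$ --- but you would still have to check that $n\geqslant 54k^2+\frac{9}{2}k$ clears Kelly--Moser's own threshold, and your closing guess about where the constant $54k^2$ comes from does not match its actual source in the paper.
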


The function, $f(k)\buildrel\rm def\over= 1 + k \binom{n-k}{2}-\binom{k}{2}\left(\frac{n-k}{2}\right)$, 
is a cubic polynomial of $k$ and can be rewritten as:
\[
f(k)=\frac{3}{4}k^3+\frac{1}{4}(1-5 n) k^2 +\frac{1}{4}n (-1+2 n) k + 1
\]

Let $c_1$ and $c_2$, where $c_1<c_2$, be the function's two local extrema at 
$\frac{1}{9} \left(-1+5 n\pm\sqrt{1-n+7 n^2}\right)$.
For all $n\geqslant4$, $f(c_1)>0$ and $f(c_2)<0$.  Furthermore, $f''(c_1)<0$ and $f''(c_2)>0$.

\begin{proof}[Proof of Theorem \ref{thm-total-planes}:]
Trivially, $S$ contains $\binom{n}{2}$ pairs of points.
We define the degree for a pair of points to be the number of determined planes to which the pair is incident.
\item{Case 1: More than $\frac{n}{2}$ pairs of points have degree $<6k$.}

These ($>\frac{n}{2}$) pairs cannot form a matching, hence two pairs of low degree ($<6k$) must share a point.
Assume two such pairs are $\{p,q\}$ and $\{p,r\}$. Let $M$ be the plane determined by the points $p$, $q$, and $r$.
Let $a<6k$ be the number of planes determined by $S$, not including $M$, incident to the pair $\{p,q\}$.
Likewise, let $b<6k$ be the number of planes incident to the pair $\{p,r\}$.
Since no three points are collinear, any plane passing through $\{p,q\}$ can share at most one point of $S$, 
other than $p$, with a plane though the pair $\{p,r\}$.
Therefore, at most $a\cdot b < 36k^2$ points of $S$ are not on $M$.
If $M$ has exactly $n-x$ points on it, then $k \leqslant x < 36k^2$. 

We claim that for all $n \geqslant g(k)$, the following two conditions are true:
\begin{list}{$\circ$}{}
\item $36k^2 < c_2$, where $c_2$ is the second local extremum of $f(k)$.
\item $f(36k^2) \geqslant f(k)$
\end{list}
To verify the first, it would be sufficient for $(5+\sqrt{6})n\geqslant324k^2+1$, and thus, it is true for all $n \geqslant 44k^2$.
To verify the second, one must first consider our formula as a function of two variables, $n$ and $k$, i.e.,
\[f_1(n,k) \buildrel\rm def\over= 1 + k \binom{n-k}{2}-\binom{k}{2}\left(\frac{n-k}{2}\right).\]
Considering $k$ to be a constant,  
\[f_2(n) \buildrel\rm def\over= f_1(n,36k^2)-f_1(n,k)\]
is a convex quadratic function of $n$.  
By solving $f_2(n)=0$, one can see that for all $n \geqslant \lceil54k^2+\frac{9}{2}k\rceil$, 
$f_2(n)$ is positive.

From the two properties listed above, it is obvious that for all $x$ such that $k \leqslant x < 36k^2$, 
it is also true that $f(x) \geqslant f(k)$. So our inequality holds in this first case.

\item{Case 2: At most $\frac{n}{2}$ pairs of points have degree $<6k$.}

There are at least $\binom{n}{2}-\frac{n}{2}=\frac{1}{2}n(n-2)$ pairs of degree $\geqslant6k$.  
Let $P_i$ be the number of pairs of points incident to exactly $i$ planes.
By Lemma \ref{lem-total-planes},
\[
6m > \sum_{k\geqslant3}\binom{k}{2}m_k 
= \sum_{i\geqslant2}i\cdot P_i \geqslant \frac{1}{2}n(n-2)(6k)
\]
Thus, for all $k\geqslant1$ (and $n\geqslant4$), 
\[
m > \frac{1}{2}n(n-2)k \geqslant \frac{1}{2}(n-k)(n-k-1)k + 1 = 1+k\binom{n-k}{2}.
\]
Thus our inequality holds in this case as well.
\end{proof}

\subsection{Planes Determined by At Most Four Points}

\begin{lem}
Let $m_3$ be the number of planes incident to exactly three points.
Given a set of points $S$, no three collinear, if $r$ points lie on a plane $\pi$, 
and $s$ do not, 
then $m_3 \geqslant s\binom{r}{2}-\frac{1}{2}rs(s-1)$.
\end{lem}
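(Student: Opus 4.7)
The plan is to obtain the bound by counting only 3-point planes of the restricted form $\{q,q_1,q_2\}$ with $q$ off $\pi$ and $q_1,q_2$ on $\pi$; this suffices for a lower bound and lets the argument proceed directly, without induction on $s$.

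For each of the $s$ points $q$ off $\pi$, I would consider the $\binom{r}{2}$ planes spanned by $q$ together with a pair from the $r$ points on $\pi$. These $\binom{r}{2}$ planes are pairwise distinct (two different pairs yielding the same plane would force three of the $r$ points of $\pi$ to be collinear along the plane's intersection with $\pi$, contradicting the no-three-collinear hypothesis), and the same reason shows that each of these planes meets $\pi$ in no further point of $S$. So each such plane is a genuine 3-point plane \emph{unless} some other off-$\pi$ point $q'$ happens to lie on it.

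Next, for each pair of distinct off-$\pi$ points $q,q'$, I would bound the number of pairs $\{q_1,q_2\}$ on $\pi$ whose plane with $q$ also contains $q'$. The line $\overline{qq'}$ either meets $\pi$ at a single point $x$ or is parallel to $\pi$; in the former case $x \notin S$, because otherwise $x,q,q'$ would be three collinear points of $S$. In the meeting case the ``bad'' pairs are exactly those collinear with $x$ in $\pi$; in the parallel case they are the pairs spanning a line of $\pi$ parallel to $\overline{qq'}$. Either way, the relevant lines share at most the non-$S$ point $x$ (or are pairwise disjoint), and each carries at most two of the $r$ points by no-three-collinear, so the number of bad pairs for $(q,q')$ is at most $\lfloor r/2 \rfloor$.

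Putting the estimates together, each $q$ contributes at least $\binom{r}{2} - (s-1)\lfloor r/2 \rfloor$ three-point planes of the stated form, and since every such plane has a unique off-$\pi$ point, summing over the $s$ choices of $q$ yields
\[
m_3 \;\geq\; s\binom{r}{2} - s(s-1)\,\tfrac{r}{2} \;=\; s\binom{r}{2} - \tfrac{1}{2}rs(s-1),
\]
which is the required inequality. The step I expect to be the main obstacle is the uniform handling of the two geometric cases in the per-pair bound---in particular, maintaining the $\lfloor r/2 \rfloor$ estimate in the parallel case, where the intersection point $x$ is unavailable and one must instead exploit the pairwise disjointness of parallel lines in $\pi$.
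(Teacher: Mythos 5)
Your proof is correct and rests on the same key estimate as the paper's: for each pair of off-plane points $q,q'$, at most $\lfloor r/2\rfloor$ pairs of points on $\pi$ are collinear with the point $x\notin S$ where the line $\overline{qq'}$ meets $\pi$, so at most that many of the $\binom{r}{2}$ planes through $q$ and a pair on $\pi$ are spoiled by $q'$. The paper packages this as an induction on $s$ with a spoiling count while you unroll it into a direct summation over the $s$ off-plane points (and handle the parallel, point-at-infinity case explicitly), but the content is essentially identical.
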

\begin{proof}
We define a plane with more than three points to be \emph{spoiled}.
This lemma is obviously true for $s=0$ and $s=1$. 
We will use induction on $s$. 

Assume $s > 1$.  Let $p$ be one of the $s$ points not on $\pi$. 
If $p$ is removed
there are $(s-1)\binom{r}{2}-\left(\frac{r}{2}\right)(s-1)(s-2)$
three-point planes.  The addition of $p$ will spoil at most 
$\left(\frac{r}{2}\right)(s-1)$ of those planes.  

Let $q$ be any one of the other $s-1$ points, in $S$ and not on $\pi$. 
Let $x\notin S$ be the point (possibly at $\infty$) at which the line 
$pq$ intersects $\pi$. 
There can be \emph{at most} $\frac{r}{2}$ pairs of points, in $S$ and on $\pi$,
that determine a line through $x$, thus forming a four point plane with $p$ and $q$.

So, the addition of $p$ introduces $\binom{r}{2}$ new planes of which
at most $\left(\frac{r}{2}\right)(s-1)$ contain four or more points.
Thus, 
\begin{multline*}
m_3\geqslant\\
(s-1)\binom{r}{2}-\left(\frac{r}{2}\right)(s-1)(s-2)-\left(\frac{r}{2}\right)(s-1)+\binom{r}{2}-\left(\frac{r}{2}\right)(s-1)\\
= s\binom{r}{2}-\left(\frac{1}{2}\right)rs(s-1)
\end{multline*}
\end{proof}

\begin{thm}
\label{thm-34-planes}
Let $S$ be a set of $n$ points in $\mathbb{R}^3$, no three collinear and at most
$n-k$ coplanar.  If $n\geqslant g(k)\buildrel\rm def\over= \left(184+\frac{8}{25}\right)k^2+4k$, then 
\[
m_3+m_4\geqslant k\binom{n-k}{2}-(n-k)\binom{k}{2}
\]
\end{thm}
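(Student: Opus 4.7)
The plan is to adapt the pair-degree dichotomy from the proof of Theorem \ref{thm-total-planes} to the quantity $m_3+m_4$. Define the \emph{degree} of a pair $\{p,q\}\subset S$ to be the number of determined planes incident to it, and fix a threshold $d$; the natural choice is $d=48k/5$, which is precisely what makes the two cases balance at $n=g(k)=2d^2+4k$. I split the argument according to whether more than, or at most, $n/2$ pairs of $S$ have degree less than $d$.

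Suppose first that more than $n/2$ pairs have degree less than $d$. Two such pairs must share a common point, since no matching on $n$ vertices has more than $\lfloor n/2\rfloor$ edges; call them $\{p,q\}$ and $\{p,r\}$ and let $M$ be the plane through $p,q,r$, containing $n-x$ points of $S$ with $x\geq k$ by hypothesis. For each $s\in S\setminus M$, the planes $\langle p,q,s\rangle$ and $\langle p,r,s\rangle$ meet in the line $\overline{ps}$, and no-three-collinearity forces that line to contain no point of $S$ other than $p$ and $s$; hence $s$ is determined by this pair of planes, each of which is one of fewer than $d$ planes through the corresponding low-degree pair, giving $x<d^2$. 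Applying the preceding lemma to $M$,
\[
m_3+m_4 \;\geq\; m_3 \;\geq\; x\binom{n-x}{2}-(n-x)\binom{x}{2} \;=\; \tfrac{1}{2}x(n-x)(n-2x) \;=:\; h(x).
\]
A direct check shows $h(k)=k\binom{n-k}{2}-(n-k)\binom{k}{2}$, which is exactly the target. Using the factorization
\[
h(x)-h(k) \;=\; \tfrac{1}{2}(x-k)\bigl[n^2-3n(x+k)+2(x^2+xk+k^2)\bigr],
\]
the required inequality $h(x)\geq h(k)$ on $k\leq x<d^2$ reduces to nonnegativity of a quadratic in $n$; evaluated at $x=d^2$ and estimated via $\sqrt{d^4+10d^2k+k^2}\leq d^2+5k$, its larger root is at most $2d^2+4k=g(k)$, which is precisely the hypothesis.

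Otherwise at most $n/2$ pairs have degree below $d$, so at least $n(n-2)/2$ pairs have degree at least $d$. Lemma \ref{lem-total-planes} then gives $6m\geq 3n+\tfrac{1}{2}dn(n-2)$, and Corollary \ref{cor:lb-34-pt-planes} converts this to $m_3+m_4\geq(5m+3n)/8\geq \tfrac{11n}{16}+\tfrac{1}{2}kn(n-2)$ after substituting $d=48k/5$; this comfortably beats the target, with room to spare even for $n$ linear in $k$. The main obstacle is therefore the cubic analysis in Case 1: the threshold $d$ must be at least $48k/5$ for Case 2 to succeed, and this pushes $x$ up to nearly $d^2\approx n/2$ in Case 1, well past the local maximum of $h$ at $x=n(3-\sqrt{3})/6$. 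The exact hypothesis $n\geq g(k)$ is what keeps $h(x)-h(k)$ nonnegative over the entire range, and any slack here would inflate the constant in $g(k)$.
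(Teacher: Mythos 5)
Your proof is correct and follows essentially the same route as the paper's: the same pair-degree dichotomy at threshold $\frac{48}{5}k$, the same application of the preceding lemma to the plane $M$ in the first case, and the same combination of Lemma \ref{lem-total-planes} with Corollary \ref{cor:lb-34-pt-planes} in the second. The only cosmetic difference is that you verify $h(x)\geqslant h(k)$ on $k\leqslant x\leqslant(\tfrac{48}{5}k)^2$ by factoring $h(x)-h(k)$ and bounding the larger root of the resulting quadratic in $n$ (for completeness you should note that this root is increasing in $x$, so the endpoint $x=(\tfrac{48}{5}k)^2$ is the worst case), whereas the paper argues via the local extrema of the cubic; both yield the same threshold $g(k)$.
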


We now define $f(k) \buildrel\rm def\over= k\binom{n-k}{2}-(n-k)\binom{k}{2}$.
The function can be rewritten in the following form:
\[
	f(k) = k^3 - \frac{3n}{2} k^2 + \frac{n^2}{2} k
\]
Let $c_1$ and $c_2$, where $c_1<c_2$, be the two local extrema of this function at $\frac{1}{6} (3\pm\sqrt{3}) n$.
We note that $f(c_1)>0$  (i.e. $f(c_1)= \frac{\sqrt{3}}{36}n^3$) and $f(c_2)<0$.
Furthermore, $f''(c_1)<0$ and $f''(c_2)>0$.

For obvious reasons, the following proof is very similar to the 
proof for Theorem \ref{thm-total-planes} in the previous section.

\begin{proof}[Proof of Theorem \ref{thm-34-planes}:]
Trivially, $S$ contains $\binom{n}{2}$ pairs of points.
We define the degree for a pair of points to be the number of determined planes to which the pair is incident.

\item{Case 1: More than $\frac{n}{2}$ pairs of points have degree $<\frac{48}{5}k$.}

These ($>\frac{n}{2}$) pairs cannot form a matching, hence two pairs must share a point.
Assume two such pairs are $\{p,q\}$ and $\{p,r\}$. Let $M$ be the plane determined by the points $p$, $q$, and $r$.
As demonstrated in Theorem \ref{thm-total-planes}, there can be at most 
$(\frac{48}{5}k)^2=(92+\frac{4}{25})k^2$ points of $S$ not on $M$.
If $M$ has exactly $n-x$ points on it, then $(\frac{48}{5}k)^2 \geqslant x \geqslant k$. 

We claim that for all $n \geqslant g(k)$, the following two conditions are true:
\begin{list}{$\circ$}{}
\item $(\frac{48}{5}k)^2 < c_2$, where $c_2$ is the second local extremum of $f(k)$.
\item $f\left((\frac{48}{5}k)^2\right) \geqslant f(k)$
\end{list}
To verify the first, it would be sufficient for $(3+\sqrt{3})n\geqslant558k^2$, and thus, it is true for all $n \geqslant 118k^2$.
To verify the second, one must first consider our formula as a function of two variables, $n$ and $k$, i.e.
\[f_1(n,k) \buildrel\rm def\over= k\binom{n-k}{2}-(n-k)\binom{k}{2}.\]
Considering $k$ to be a constant,
\[f_2(n) \buildrel\rm def\over= f_1\left(n,\left(\frac{48}{5}k\right)^2\right)-f_1(n,k)\]
is a convex quadratic function of n.  
By solving $f_2(n)=0$, one can see that for all $n \geqslant \lceil\left(184+\frac{8}{25}\right)k^2+4k\rceil$ (and $k\geqslant1$), 
$f_2(n)$ is positive.

From the two properties listed above, it is obvious that for all x such that $k \leqslant x < (\frac{48}{5}k)^2$, 
it is also true that $f(x) \geqslant f(k)$. So our inequality holds in this first case.

\item{Case 2: At most $\frac{n}{2}$ pairs of points have degree $<\frac{48}{5}k$.}

There are at least $\binom{n}{2}-\frac{n}{2}=\frac{1}{2}n(n-2)$ pairs of degree $\geqslant\frac{48}{5}k$.
Let $P_i$ be the number of pairs incident to exactly $i$ planes.  By Corollary \ref{cor:lb-34-pt-planes}, 
we know that $\frac{48}{5}(m_3+m_4)=\frac{8}{5}\cdot6\cdot(m_3+m_4)\geqslant 6m$. 
Thus, by Lemma \ref{lem-total-planes}
\[
\left(\frac{48}{5}\right)(m_3+m_4) > \sum_{k\geqslant3}\binom{k}{2}m_k 
= \sum_{i\geqslant2}i\cdot P_i \geqslant \frac{1}{2}n(n-2)\left(\frac{48}{5}\right)k
\]
Therefore, $m_3+m_4 > \frac{1}{2}n(n-2)k\geqslant k\binom{n-k}{2}$. So the theorem is true in this case as well.
\end{proof}

\subsection{Corollaries and Conjectures}
\label{sec:cor-conj}

From Theorem \ref{thm-total-planes}, we have two corollaries.  Both relate to conjectures found in \cite[p. 815]{EP95}.
In \cite{EP77}, Erd\H{o}s and Purdy proved $m \geqslant 1+\binom{n-1}{2}$ for $n\geqslant552$.
Theorem \ref{thm-total-planes} improves this by showing it to be true for $n\geqslant 59$ (i.e., $k=1$), 
which leads to the following corollary.
\begin{cor}
Let $S$ be a set of $n\geqslant 59$ points in $\mathbb{R}^3$, no three collinear and not all coplanar.
Let $m$ be the number of planes determined by $S$, and $t$ the number of lines. For all such point sets,
$m - t + n \geqslant 2$.
\end{cor}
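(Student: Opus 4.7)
The plan is to reduce the corollary directly to Theorem \ref{thm-total-planes} with $k=1$. First, I observe that because no three points of $S$ are collinear, every pair of points determines a distinct line, so $t = \binom{n}{2}$ exactly. Next, the hypothesis ``not all coplanar'' means that at most $n-1$ points of $S$ lie in a common plane, which is precisely the hypothesis of Theorem \ref{thm-total-planes} with $k=1$.

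I then need to check that $n$ is large enough for the theorem to apply, i.e. $n \geqslant g(1) = 54 + \tfrac{9}{2} = 58.5$, so $n \geqslant 59$ suffices, which matches the statement. Applying the theorem with $k=1$ gives
\[
m \;\geqslant\; 1 + 1\cdot\binom{n-1}{2} - \binom{1}{2}\left(\frac{n-1}{2}\right) \;=\; 1 + \binom{n-1}{2},
\]
since $\binom{1}{2} = 0$ and the subtractive term vanishes.

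Finally I simply assemble the pieces. Using $\binom{n}{2} - \binom{n-1}{2} = n-1$, I compute
\[
m - t + n \;\geqslant\; 1 + \binom{n-1}{2} - \binom{n}{2} + n \;=\; 1 - (n-1) + n \;=\; 2,
\]
which is the desired inequality. There is no real obstacle here beyond verifying that $k=1$ is the correct specialization of Theorem \ref{thm-total-planes} and that the threshold $n \geqslant 59$ matches $\lceil g(1)\rceil$; the rest is just the algebraic identity between $\binom{n}{2}$ and $\binom{n-1}{2}$.
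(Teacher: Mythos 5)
Your proposal is correct and follows essentially the same route as the paper: specialize Theorem \ref{thm-total-planes} to $k=1$ to get $m \geqslant 1+\binom{n-1}{2}$, use $t=\binom{n}{2}$ from the no-three-collinear hypothesis, and conclude via $\binom{n}{2}-\binom{n-1}{2}=n-1$. The only addition is your explicit check that $n\geqslant 59$ matches $\lceil g(1)\rceil$, which the paper leaves implicit.
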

\begin{proof}
Putting $k=1$, Theorem \ref{thm-total-planes} says that $m \geqslant 1+\binom{n-1}{2}$.
Since no three points are collinear, $t=\binom{n}{2}=\binom{n-1}{2}+(n-1)$. Thus, $m-t+n\geqslant2$.
\end{proof}

This is conjectured by Purdy to be true when $n\geqslant32$ for any finite set of points in $\mathbb{R}^3$ that are not all coplanar 
and not all on two skew lines.

\begin{cor}
Let $S$ be a set of $n\geqslant 225$ points in $\mathbb{R}^3$, no three collinear and no $n-1$ coplanar.
Let $m$ be the number of planes determined by $S$, and $t$ the number of lines. For all such point sets, $m\geqslant t$.
\end{cor}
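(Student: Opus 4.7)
The plan is to apply Theorem~\ref{thm-total-planes} with $k=2$. The hypothesis that no $n-1$ points are coplanar is exactly the statement that at most $n-2$ points lie on any plane, so the case $k=2$ is applicable. Moreover, the threshold $n \geqslant 225$ in the corollary matches precisely the value $g(2) = 54\cdot 2^2 + \tfrac{9}{2}\cdot 2 = 225$ required by Theorem~\ref{thm-total-planes}. So the hypotheses line up exactly, and no separate case analysis is needed.

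Having applied the theorem, I would obtain
\[
m \;\geqslant\; 1 + 2\binom{n-2}{2} - \binom{2}{2}\cdot\frac{n-2}{2} \;=\; 1 + (n-2)(n-3) - \frac{n-2}{2}.
\]
Since no three points of $S$ are collinear, every pair determines its own line, so $t = \binom{n}{2} = \tfrac{n(n-1)}{2}$. It remains only to verify the algebraic inequality
\[
1 + (n-2)(n-3) - \frac{n-2}{2} \;\geqslant\; \frac{n(n-1)}{2}.
\]
Multiplying through by $2$ and simplifying reduces this to $n^2 - 10n + 16 \geqslant 0$, i.e. $(n-2)(n-8) \geqslant 0$, which holds comfortably for every $n \geqslant 225$.

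There is no real obstacle to overcome here: essentially all of the work has already been done in proving Theorem~\ref{thm-total-planes}, and the corollary reduces to the mechanical verification above. The only thing one has to notice is the perfect numerical coincidence between $g(2) = 225$ and the hypothesis under which $m - t + n \geqslant 2$ sharpens into $m \geqslant t$, which is why this is stated as a corollary rather than requiring an independent argument.
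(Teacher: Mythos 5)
Your proof is correct and follows the same route as the paper: apply Theorem~\ref{thm-total-planes} with $k=2$ (for which $g(2)=225$), use $t=\binom{n}{2}$ since no three points are collinear, and verify the resulting quadratic inequality, which the paper notes holds already for $n\geqslant 9$. The only difference is that you retain the $+1$ term and spell out the algebra, which the paper elides.
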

\begin{proof}
Putting $k=2$, thus $n\geqslant225$, Theorem \ref{thm-total-planes} shows that 
\[
m\geqslant 2 \binom{n-2}{2}-\frac{n-2}{2}\geqslant\binom{n}{2}=t
\] 
(for $n\geqslant9$).
\end{proof}

Purdy proved in \cite{Pur86} that if the points are not all coplanar and not all on two skew lines 
then $m\geqslant ct$, for some $c>0$.
Erd\H{o}s asked what are sufficient conditions for $m\geqslant t$.  
Purdy conjectured in \cite{EP95}
that $m\geqslant t$ when $n$ is sufficiently large, no $n-1$ points are coplanar and the points do not lie on two skew lines.

This conjecture is easily seen to be false for projective geometries over finite fields.
Let $q=p^k$, where $p$ is any prime and $k\geqslant1$.
We denote by $PG(d,q)$ the projective $d$-space over $GF(q)$,
and by ${m \brack k}_q$, we denote the Gaussian coefficient.
In $PG(3, q)$, the number of points (same as the number of planes) is ${4 \brack 3}_q = q^3+q^2+q+1$,
and the number of lines is ${4 \brack 2}_q=(q^2+1)(q^2+q+1)=q^4+q^3+2q^2+q+1$ \cite[p. 66]{Hir79} \cite[p. 168]{Oxl92}.
(From this one can see that $n < t > m$.)
Since $PG(3, q)$ is a rank-4 matroid, the conjecture is also false for matroids.

This conjecture has also been extended by Purdy to $d$-dimensional space \cite[p. 815]{EP95}.  We define the rank of a flat (i.e., $rk(f)$) 
to be one more than its dimension, e.g., points have rank one, lines rank two, etc.
A set of flats $F=\{f_1, f_2, \ldots, f_r\}$ is defined to be a \emph{covering} set of flats for a point set $S\subset \mathbb{R}^d$ if every point in $S$ is incident to a flat in $F$.
Furthermore, the rank of a set of flats $F$ is defined to be the sum of the ranks of its members, i.e., $rk(F)=rk(f_1)+rk(f_2)+\ldots+rk(f_r)$.
A point configuration $S$ is \emph{irreducible} if there does not exist a \emph{covering} set of $r$ ($\geqslant 2$) flats 
with rank at most $d+1=rk(\mathbb{R}^d)$.

Let $w_k$ be the number of determined flats of rank $k$.
For any sufficiently large irreducible point configuration in $\mathbb{R}^d$, Purdy conjectures that $w_d\geqslant w_{d-1}$.
By projection (i.e., one which preserves the number of $(d-2)$-flats), it would follow that
$w_d\geqslant w_{d-1} \geqslant w_{d-2} \geqslant \ldots \geqslant w_1$,
thus implying unimodality in this case. 

Seymour proved \cite{Sey82}, in the more general context of matroids, that if no five points are collinear then $t^2\geqslant mn$.
This is related to a conjecture by Mason \cite{Mas72} and others that the sequence of Whitney Numbers
(i.e., $w_1, w_2, w_3,\ldots$) is log-concave,
i.e., $w_i^2 \geqslant w_{i-1}w_{i+1}$ for all $i>0$.
Purdy proved in $\mathbb{R}^3$ \cite{Pur86} that if the points do not all lie on a plane then $t^2 \geqslant c m n$, for some $c>0$.
See \cite{EP95} for further discussion of these and other conjectures.

\section{Spheres Determined by Points in $\mathbb{R}^3$}

Circular Inversion has a lesser known extension to higher dimensional space, i.e., spherical inversion.
For the convenience of the reader, we provide below a basic definition and the relevant properties of spherical inversion.  
We refer the reader to \cite[pp. 83--87]{Bla00} for a rigorous definition and proof of the given properties.

Let $p$ be any fixed point in $\mathbb{R}^3$. Without loss of generality, let $p$ be the origin of a coordinate system. 
We assume, of course, that coordinates are determined relative to some orthonormal basis.
Let $q$, with coordinates $(x, y, z)$, be any other point in $\mathbb{R}^3$.
We define the ``spherical inverse'' to be (for any $q \neq p$): 
\[
	Inv_p(q)\buildrel\rm def\over= \frac{q}{||q||^2} = \left(\frac{1}{x^2+y^2+z^2}\right) (x, y, z)
\]
(This inversion occurs about a sphere of unit radius centered at $p$. $Inv_p(p)$ is left undefined.)
We apply this mapping, i.e., $Inv$, not only to points but also to sets of points, either infinite or finite, intending the obvious results.

The mapping has the following properties (for any arbitrary point $p$):
\begin{itemize}
\item $Inv_p$ is self-inverse, i.e., $q = Inv_p(Inv_p(q))$.
\item If $\pi$ is a plane passing through $p$, then $Inv_p(\pi) = \pi$.
\item If $\pi$ is a plane not passing through $p$, then $Inv_p(\pi)$
is a sphere passing though $p$.
\end{itemize}

\begin{lem}
\label{lem-inv-props}
Let $S$ be a set of $n\geqslant5$ points in $\mathbb{R}^3$, not all cospherical, no $n-1$ coplanar, no four cocircular and no three collinear.
For any arbitrary $p\in S$, the set $Inv_p(S \backslash \{p\})\cup\{p\}$ will be likewise.
\end{lem}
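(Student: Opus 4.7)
The plan is to verify each of the four conditions (no three collinear, no four cocircular, no $n-1$ coplanar, not all cospherical) for $S' \buildrel\rm def\over= Inv_p(S\backslash\{p\})\cup\{p\}$ separately, by assuming a violation and inverting the offending configuration back through $p$ to produce a violation of one of the four hypotheses on $S$. The whole argument runs on the standard inversion ``dictionary'' that the excerpt has already recorded: a plane through $p$ is fixed while a plane not through $p$ swaps with a sphere through $p$; analogously, a line through $p$ is fixed while a line not through $p$ swaps with a circle through $p$; and spheres (resp.\ circles) not through $p$ map to spheres (resp.\ circles) not through $p$.

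First I would handle ``no three collinear'' in $S'$. If three points of $S'$ lie on a line $\ell$, split on whether $p\in\ell$: if yes, then $Inv_p(\ell)=\ell$ and the two preimages together with $p$ give three collinear points of $S$; if no, then $Inv_p(\ell)$ is a circle through $p$, and the three preimages together with $p$ give four cocircular points of $S$. Next, for ``no four cocircular'' in $S'$: if four points of $S'$ lie on a circle $C$, then $p\in C$ forces three collinear points of $S$ (since $Inv_p(C)$ is then a line not through $p$), while $p\notin C$ forces four cocircular points of $S$.

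Third, I would address ``no $n-1$ coplanar'' in $S'$. Suppose $n-1$ points of $S'$ lie on a plane $\pi$. If $p\in\pi$, then $\pi$ is fixed by $Inv_p$, so the $n-2$ non-$p$ preimages together with $p$ produce $n-1$ coplanar points of $S$. If $p\notin\pi$, then the $n-1$ coplanar image points are all inverses, and $Inv_p(\pi)$ is a sphere through $p$ carrying all $n-1$ preimages; together with $p$ this forces all of $S$ to be cospherical. Fourth, for ``not all cospherical'' in $S'$: if the $n$ points of $S'$ lie on a sphere $\Sigma$, then $p\in\Sigma$, so $Inv_p(\Sigma)$ is a plane not through $p$ containing every preimage in $S\backslash\{p\}$; this gives $n-1$ coplanar points of $S$, contradicting the hypothesis.

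The argument requires no analytic work beyond the bijection properties of $Inv_p$, so the main ``obstacle'' is purely bookkeeping: one has to pair each potential violation in $S'$ with the correct hypothesis on $S$ that it translates to, remembering that inversion interchanges the roles of ``incident to $p$'' and ``not incident to $p$'' and thereby trades planes for spheres and lines for circles in exactly the subcases where $p$ participates. Once the four cases above are laid out in parallel, the lemma follows immediately.
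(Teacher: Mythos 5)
Your proof is correct and follows essentially the same route as the paper: both rest on the inversion dictionary exchanging lines/planes through $p$ (fixed) with circles/spheres through $p$, the paper spelling out only the collinear~$\leftrightarrow$~cocircular correspondence (derived by intersecting two planes through the line) and dismissing the remaining conditions as obvious. Your version is simply more systematic, explicitly pairing each of the four conditions on $S'$ with the hypothesis on $S$ it translates to, which fills in exactly the bookkeeping the paper leaves implicit.
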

\begin{proof}
For contradiction, assume $q_1$, $q_2$ and $q_3$ are three collinear points in $S$.
Choose the center of inversion, $p$, to be any other point in $S$.
Let $l$ be the line determined by the $q_i$.
Let $M$ be the plane incident to $l$ and passing through $p$ (the center of inversion).
Let $N$ be any plane incident to $l$ but not passing through $p$.
Obviously $Inv_p(l) = Inv_p(M) \cap Inv_p(N)$, which is the intersection of a 
plane (i.e., $Inv_p(M)$) with a sphere (i.e., $Inv_p(N)$).
Thus, $Inv_p(l)$ is a circle incident to these four points.
By self-inversion, one can see that the converse is also true.

The other claims are obvious.
\end{proof}

\subsection{Spheres Incident To Exactly Four Points}

Using the lemma above and some careful counting, we now prove a lower bound on the number of four point spheres 
determined by a set of points in $\mathbb{R}^3$.  We also utilize a result, from an earlier section, 
concerning the number of three-point planes determined by a set of points.
\begin{thm}
Let $S$ be a set of $n\geqslant5$ points in $\mathbb{R}^3$, not all cospherical or coplanar, no four cocircular and no three collinear.
There exist at least $\epsilon\binom{n}{3}$ spheres incident to exactly four points of $S$, where $\epsilon=\frac{9}{208}$.
\end{thm}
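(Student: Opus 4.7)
The plan is to invoke Theorem \ref{thm-3-pt-planes} twice for each point $p \in S$ -- once on the inverted set $S'_p = Inv_p(S \setminus \{p\})$, a set of $n-1$ points, and once on the augmented set $S'_p \cup \{p\}$, a set of $n$ points -- and then to combine the resulting global inequalities with the elementary triple-counting identity $\binom{n}{3} = \sum_{k\geqslant 3}\binom{k}{3}m_k$.

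First I would establish the two bookkeeping dictionaries produced by spherical inversion. Since $Inv_p$ fixes planes through $p$ while sending planes avoiding $p$ to spheres through $p$, a three-point plane of $S'_p$ corresponds to a four-point plane of $S$ through $p$ (when the plane is invariant) or to a four-point sphere of $S$ through $p$ (otherwise); and a three-point plane of $S'_p \cup \{p\}$ is either a three-point plane of $S$ through $p$ (one of its three points being $p$, the invariant case) or a four-point sphere of $S$ through $p$ (three points in $S'_p$, plane avoiding $p$). Writing $m_k^{(p)}$ and $c_k^{(p)}$ for the number of $k$-point planes and $k$-point spheres of $S$ incident to $p$, these translate to the identities
\[
m_3(S'_p) = m_4^{(p)} + c_4^{(p)}, \qquad m_3(S'_p \cup \{p\}) = m_3^{(p)} + c_4^{(p)}.
\]

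Next I would check that both $S'_p$ and $S'_p \cup \{p\}$ satisfy the hypotheses of Theorem \ref{thm-3-pt-planes}. Lemma \ref{lem-inv-props} supplies ``no three collinear'' (using the ``no three collinear'' and ``no four cocircular'' assumptions on $S$). Non-coplanarity of $S'_p \cup \{p\}$ follows because any containing plane is forced through $p$, hence invariant, hence contains all of $S$; non-coplanarity of $S'_p$ follows because a plane through $p$ would likewise force all of $S$ onto it, while a plane avoiding $p$ would invert to a sphere through $p$ containing all of $S$, contradicting non-cosphericity.

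With Theorem \ref{thm-3-pt-planes} applicable, summing each resulting inequality over all $p \in S$ (and using $\sum_p m_k^{(p)} = k\,m_k$ together with $\sum_p c_4^{(p)} = 4c_4$) yields
\[
4(m_4 + c_4) \geqslant \frac{12}{13}\binom{n}{3}, \qquad 3m_3 + 4c_4 \geqslant \frac{2n^2(n-1)}{13}.
\]
Taking three times the first inequality plus the second produces $3(m_3 + 4m_4) + 16c_4 \geqslant \frac{36}{13}\binom{n}{3} + \frac{2n^2(n-1)}{13}$. Invoking the triple-counting bound $m_3 + 4m_4 \leqslant \binom{n}{3}$ to eliminate the plane counts and simplifying then gives $16c_4 \geqslant \frac{n(n-1)(3n+2)}{26}$, which rearranges to $c_4 \geqslant \frac{9}{208}\binom{n}{3}$ for every $n \geqslant 5$.

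The main obstacle is really just confirming the two inversion dictionaries above and checking that the non-degeneracy hypotheses propagate through inversion so that Theorem \ref{thm-3-pt-planes} can be applied to both inverted sets; everything else is a short algebraic combination chosen so that the coefficient $\binom{k}{3}$ with $k = 4$ matches the weight $4$ on $m_4$. A secondary wrinkle is the exceptional value $n = 7$ in the underlying Csima-Sawyer result, which may require separate treatment at $n \in \{8,9\}$.
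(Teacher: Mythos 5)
Your proposal is correct, and it reaches the stated bound (in fact the slightly stronger $c_4\geqslant\frac{n(n-1)(3n+2)}{416}$, versus $\frac{9}{208}\binom{n}{3}=\frac{3n(n-1)(n-2)}{416}$). The two inversion dictionaries are sound: inversion is a bijection of $\mathbb{R}^3\setminus\{p\}$, so exact incidence counts are preserved, and your derivation of non-coplanarity of both $S'_p$ and $S'_p\cup\{p\}$ from ``not all cospherical or coplanar'' is exactly right and lets you avoid the paper's separate opening case for $n-1$ coplanar points. Where you genuinely diverge from the paper is in how the ``planes of $S'_p$ through $p$'' are controlled. The paper fixes one center $p$ at a time and runs a dichotomy: either at most $\frac{1}{8}(n-1)(n-2)$ of the three-point planes of $S'_p$ pass through $p$ (so at least $(\frac{2}{13}-\frac{1}{8})(n-1)(n-2)$ do not, each giving a four-point sphere through $p$), or more do, in which case a pair-counting argument on $\binom{n-1}{2}$ pairs shows that few three-point planes of $S'_p\cup\{p\}$ can pass through $p$, and Theorem \ref{thm-3-pt-planes} applied to the $n$-point set $S'_p\cup\{p\}$ again yields $\frac{3}{104}(n-1)(n-2)$ spheres through $p$; summing over $p$ and dividing by $4$ gives $\frac{9}{208}\binom{n}{3}$. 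You instead apply Theorem \ref{thm-3-pt-planes} to both inverted sets for every $p$, sum globally to get $4(m_4+c_4)\geqslant\frac{12}{13}\binom{n}{3}$ and $3m_3+4c_4\geqslant\frac{2n^2(n-1)}{13}$, and eliminate the plane terms with $m_3+4m_4\leqslant\sum_{k\geqslant3}\binom{k}{3}m_k=\binom{n}{3}$. This buys a cleaner, case-free argument and a marginally better constant; the paper's local dichotomy buys a proof that needs no global identity and makes the ``spheres through $p$'' count explicit for each individual center. Your closing caveat about the Csima--Sawyer exception at seven coplanar points (affecting $n\in\{8,9\}$ here) is a legitimate wrinkle, but it is inherited from Theorem \ref{thm-3-pt-planes} as stated and afflicts the paper's own proof equally, so it is not a gap specific to your route.
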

\begin{proof}
Choose any $p\in S$ to be the center of inversion.
Let $S'=Inv_p(S \backslash \{p\})$, and let $S''=S'\cup\{p\}$.

Assume $n-1$ points in $S$ are coplanar.  Let $p$ be the point not on the plane.  
Since no three points are collinear and no four cocircular, there are $\binom{n-1}{3}$ 
four point spheres determined through $p$, and the theorem is true.
We now assume that no $n-1$ points are coplanar in $S$.

By Lemma \ref{lem-inv-props}, $S''$ is a set of $n$ points such that no $n-1$ are coplanar.
Thus, the points of $S'$ are not all coplanar (and no three collinear).
From Theorem \ref{thm-3-pt-planes}, we know that $S'$ determines at least $\frac{2}{13}(n-1)(n-2)$ three-point planes.
There are two possible cases.
\begin{itemize}
\item{Case 1: At most $\frac{1}{8}(n-1)(n-2)$ of these three-point planes (determined by points in $S'$) pass through $p$.}

The set $S$ thus contains at least $(\frac{2}{13}-\frac{1}{8})(n-1)(n-2)=\frac{3}{104}(n-1)(n-2)$ four point spheres incident to point $p$.
\item{Case 2: More than $\frac{1}{8}(n-1)(n-2)$ of these three-point planes (determined by points in $S'$) pass through $p$.}

We now carefully count the $\binom{n-1}{2}=\frac{1}{2}(n-1)(n-2)$ pairs of points in $S'$ (as seen from $p$).
Each of the at least $\frac{1}{8}(n-1)(n-2)$ planes that pass through $p$ account for at least three of these pairs, 
i.e., at least $\frac{3}{8}(n-1)(n-2)$ pairs.
This leaves at most $(\frac{1}{2}-\frac{3}{8})(n-1)(n-2)=(\frac{1}{8})(n-1)(n-2)$ pairs to form a three-point plane with $p$ in $S''$.
Hence, the set $S''$ must determine at least $(\frac{2}{13}-\frac{1}{8})(n-1)(n-2)=\frac{3}{104}(n-1)(n-2)$ 
three-point planes that do not pass through $p$.

Thus, the set $S$ (again) contains at least $\frac{3}{104}(n-1)(n-2)$ four point spheres incident to point $p$.
\end{itemize}

Since the selection of $p$ was arbitrary this argument may be repeated $n$ times, counting each of these spheres four times.
Thus the number of four point spheres determined by $S$ is at least 
$(\frac{1}{4})(\frac{3}{104})n(n-1)(n-2) = \frac{9}{208}\binom{n}{3}$.
\end{proof}

\subsection{The Orchard Problem}
\label{sec-orchard}

Let $t_3(S)$ be the number of three-point lines determined by some finite point set $S$ in the plane.
Let $t_3^{orchard}(n)$ be the maximum of $t_3(S)$ for all $n$ element point sets $S$ in the plane containing no four collinear points.
We refer to determining the exact value of $t_3^{orchard}(n)$ as the ``classic'' Orchard Problem.

Consider a set of $n$ points in three-space, no three collinear and no four cocircular.
Our ultimate aim is to prove that the minimum number of spheres determined by 
these points is exactly $1+\binom{n-1}{3}-t_3^{orchard}(n-1)$. 

To this end, we let $S$ be a set of $n$ cospherical points with no four cocircular.
We shall prove in this section that the maximum number of planes, determined by $S$, that share a common point $p\notin S$ 
is equal to $t_3^{orchard}(n)$. For brevity, we will refer to this bound as $\mathcal{M}_3^{max}(n)$.

We begin with the lemma below that will be used in the following section (i.e., Section \ref{sec:sphere-at-most-5})
to prove a lower bound on the total number of determined spheres.
It is this lemma that led us to the rather unexpected equivalence to the Orchard Problem.

\begin{lem}
\label{lem-cospherical-1-off}
Let $S$ be a set of $n$ points in $\mathbb{R}^3$, exactly $n-k$ cospherical, no four cocircular and no three collinear.
Let $\sigma_0$ be the sphere to which $n-k$ points are incident.
Let $p\in S$ be any point not on $\sigma_0$.
There are at least $\binom{n-k}{3}-\mathcal{M}_3^{max}(n-k)$ spheres 
determined by $p$ and three points on $\sigma_0$.  
Furthermore, each of these spheres is incident to at most $3+k$ points of $S$.
\end{lem}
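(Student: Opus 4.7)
The plan is to start from the $\binom{n-k}{3}$ triples of points on $\sigma_0$ and subtract an upper bound on the ``bad'' triples --- those three points that are coplanar with $p$ and hence fail to determine a sphere together with $p$. Since no three points of $S$ are collinear, every triple on $\sigma_0$ is non-collinear, so the only way a triple $\{q_1,q_2,q_3\} \subset \sigma_0$ fails to yield a unique sphere through $p$ is if $p \in \mathrm{aff}(q_1,q_2,q_3)$. The first step is therefore to identify these bad triples with something countable.

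Next, I would argue that each bad triple lies in some plane $\Pi$ through $p$, and $\Pi \cap \sigma_0$ is a circle (or a point), so $\Pi$ meets $S \cap \sigma_0$ in at most three points by the no-four-cocircular hypothesis. Consequently, each bad triple occupies exactly one plane through $p$, and distinct bad triples yield distinct such planes. The number of planes through the single external point $p$ that each contain three points of the cospherical set of size $n-k$ is, by the very definition given just above the lemma, at most $\mathcal{M}_3^{max}(n-k)$. A parallel check shows that distinct good triples yield distinct spheres: if triples $T_1 \neq T_2$ both gave the same sphere $\sigma$, then $T_1 \cup T_2 \subset \sigma \cap \sigma_0$, which is a single circle containing at least four points of $S$, contradicting the hypothesis. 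This establishes the lower bound $\binom{n-k}{3} - \mathcal{M}_3^{max}(n-k)$.

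For the incidence bound on each resulting sphere $\sigma$, I would use the same circle-intersection idea a second time. Since $p \in \sigma$ but $p \notin \sigma_0$, the two spheres are distinct and their intersection $\sigma \cap \sigma_0$ is a circle; any point of $S$ lying on both must lie on this circle, hence there are at most three such points by no-four-cocircular. The remaining points of $\sigma \cap S$ lie off $\sigma_0$, and $S$ contains only $k$ off-sphere points in total (with $p$ among them). Thus $|\sigma \cap S| \leqslant 3 + k$. The main obstacle is conceptual rather than technical: recognizing that the coplanarity failure mode is exactly the Orchard-type count $\mathcal{M}_3^{max}(n-k)$ and that one single hypothesis --- no four cocircular --- is doing double duty, both capping each plane through $p$ at three points of $\sigma_0$ and capping each new sphere at three points of $\sigma_0$.
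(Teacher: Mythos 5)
Your proposal is correct and follows essentially the same route as the paper: the paper projects the $n-k$ cospherical points from $p$ onto a plane and bounds the three-point lines (equivalently, the planes through $p$) by $\mathcal{M}_3^{max}(n-k)$, which is exactly your count of ``bad'' coplanar triples. You additionally spell out two details the paper leaves implicit --- that distinct good triples give distinct spheres, and the $3+k$ incidence bound via the circle $\sigma\cap\sigma_0$ --- both of which are correct and worth stating.
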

\begin{proof}
From $p$, project the $n-k$ points of $\sigma_0$ onto a plane $\pi$.
Let $t_k(p)$ be defined as in Definition \ref{def-lines-pi} for the $n-k$ points being projected.
Since no four points are cocircular, $t_k(p)=0$ for all $k\geqslant4$. 
So obviously, $t_3(p) \leqslant \mathcal{M}_3^{max}(n-k) \leqslant t_3^{orchard}(n-k) \leqslant \frac{1}{3}\binom{n-k}{2}$. 
All point triples on $\sigma_0$ not forming a plane through $p$ must determine a sphere with $p$.
\end{proof}

In the proof above, we assert that obviously $M_3^{max}(n)\leqslant t_3^{orchard}(n)$.
Combinatorially, one can see that $t_3^{orchard}(n)\leqslant\frac{1}{3}\binom{n}{2}$, 
since no two distinct three-point lines may share a pair of points.
This is further improved by observing that, by counting all pairs of points, $3t_3+t_2=\binom{n}{2}$.
Using the Csima-Sawyer result (i.e., $t_2\geqslant \frac{6n}{13}$),
this upper bound can be lowered to $\lfloor\frac{1}{6}n^2-\frac{25}{78}n\rfloor$.

Improving upon Sylvester's work, point configurations have been demonstrated by Burr, Gr\"{u}nbaum, and Sloane \cite{BGS74}, 
and again by F\"{u}redi and Pal\'{a}sti's \cite{FP84} that come very close to this theoretical bound. 
More specifically, these configurations have shown that $t_3^{orchard}(n)\geqslant\lfloor\frac{1}{6}n^2-\frac{1}{2}n+1\rfloor$.
See \cite[pp. 315--318]{BMP05} for coverage of more recent results.

A set of $n$ points on a sphere, no four cocircular will project onto $n$ points on a plane, 
no four collinear, so there will be at most $t_3^{orchard}(n)$ three-point lines and consequently at most 
that many planes through three points on the sphere and the point of projection. 
However, to show these two bounds to be equivalent we need to also prove the more difficult ``converse'' of this.
That is, we must prove that every orchard configuration in the plane can back project into points on a sphere, no four cocircular. 
From this equivalence, we conclude that not only is $1+\binom{n-1}{3}-t_3^{orchard}(n-1)$ the lower bound for determined spheres 
(see Section \ref{sec:sphere-at-most-5}), but that this bound is always attainable.
(Note that the projection of a circle onto a plane is a conic section, 
and that five points, no three collinear, are required to determine a unique conic section \cite[p. 85]{Cox74}.)

\begin{thm}
Let $S$ be a configuration of $n$ points, no four collinear, on a plane $\pi$.
There must exist a set $S'$ of $n$ points, all cospherical but no four cocircular, 
and a point of projection $p$ such that projecting the points from $S'$ onto $\pi$
produces the point set $S$.
\end{thm}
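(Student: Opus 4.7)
The plan is to recover $S'$ by back-projecting $S$ from a carefully chosen point $p\notin\pi$ onto a carefully chosen sphere $\sigma$, and then to verify by a genericity argument that for suitable $(\sigma,p)$ no four back-projected points are cocircular. First I would fix an auxiliary sphere $\sigma\subset\mathbb{R}^3$ and a point $p\notin\sigma\cup\pi$ in general position, chosen so that for each $s\in S$ the line $\overline{ps}$ crosses $\sigma$ transversally in two points. Define $\phi(s)\in\sigma$ to be the intersection nearer $p$ and set $S'=\phi(S)$. Then central projection from $p$ tautologically sends $S'$ to $S$, $|S'|=n$ (distinct rays through distinct points of $S$ pick out distinct nearest intersections), and no three points of $S'$ can be collinear since any line meets $\sigma$ in at most two points.

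The content of the proof lies in showing that for suitable $(\sigma,p)$ no four points of $S'$ are cocircular. Four points on $\sigma$ are cocircular exactly when they are coplanar in $\mathbb{R}^3$, so I would split into cases over the $\binom{n}{4}$ quadruples of $S$. If three of the chosen four points $s_1,s_2,s_3\in S$ lie on a common line $\ell\subset\pi$, their back-projections lie on the circle $\sigma\cap\Pi$ where $\Pi=\mathrm{span}(p,\ell)$, and a fourth back-projection coplanar with these three (which are not collinear on $\sigma$) would have to lie in $\Pi$; this forces $s_4\in\Pi\cap\pi=\ell$, contradicting the no-four-collinear hypothesis on $S$. Otherwise no three of the four are collinear, and the coplanarity of $\phi(s_1),\ldots,\phi(s_4)$ is the vanishing of a $4\times 4$ determinant whose entries are algebraic functions of the parameters of $(\sigma,p)$; hence the set of ``bad'' $(\sigma,p)$ for that quadruple is a closed algebraic subvariety of the parameter space of spheres-and-points. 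If each such subvariety is proper, then since $\binom{n}{4}$ is finite their union is a proper closed subset, and any $(\sigma,p)$ in its open dense complement yields the desired $S'$.

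The main obstacle is the last verification: for each quadruple with no three collinear, exhibiting at least one $(\sigma_0,p_0)$ at which the back-projections fail to be coplanar, so that the bad subvariety really is proper. Heuristically this must hold because $\phi$ is a nonlinear embedding of $\pi$ into the curved surface $\sigma$, so four generic planar points are pushed off of every common plane by the curvature of $\sigma_0$; a parameter count confirms it, since circles on $\sigma$ form only a $3$-parameter family while conics through four fixed points of $\pi$ form a $1$-parameter pencil inside the $5$-dimensional space of plane conics, so generically the pencil avoids the family of circle-projections. Making this fully rigorous is a direct but tedious coordinate computation (say, with $\sigma_0$ a unit sphere and $p_0$ on a coordinate axis), but once it is in hand the theorem follows by the genericity argument above.
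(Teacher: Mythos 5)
Your overall architecture matches the paper's: back-project $S$ onto a sphere from a suitable point, note that for each of the finitely many quadruples the cocircularity of the back-projections is a single algebraic condition on the parameters, dispose of quadruples containing three collinear points separately (your argument via $\Pi=\mathrm{span}(p,\ell)$ is clean, and in fact more explicit than the paper's one-line dismissal of that case), and finish by genericity. The gap is the step you defer: exhibiting, for each quadruple with no three collinear, at least one parameter choice at which the four back-projections are \emph{not} coplanar, i.e.\ proving that each bad locus is a \emph{proper} subvariety. That is not a routine verification --- it is essentially the entire technical content of the paper's proof --- and your proposal does not supply it. The parameter count you offer (circles on $\sigma$ form a $3$-parameter family, conics through four points a pencil inside the $5$-dimensional space of conics) is only a plausibility argument: it does not exclude the a priori possibility that, for the particular low-dimensional family of back-projection maps actually in play, the pencil of conics through the four points always meets the family of projected circles. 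Nothing in your write-up rules this out, and "a direct but tedious coordinate computation" is a promissory note for the one step that carries the weight of the theorem.

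For comparison, here is how the paper closes exactly this gap. It restricts to unit spheres tangent to $\pi$ with center $\overline{p}=(p_1,0,0)$ (projecting from the center, so the back-projections are just the normalized difference vectors), and shows the cocircularity condition for a fixed quadruple takes the form $g_0(p_1)=Aa+Bb+Cc+Dd=0$, where $a,b,c,d$ are the distances from $\overline{p}$ to the four points (square roots of four monic quadratics in $p_1$) and $A,B,C,D$ are constants. Each term of $g_0'$ has poles at the complex roots of its quadratic $1+(w_1-p_1)^2+w_2^2$; after rotating so that no two points of $S$ share a first coordinate, the four quadratics are pairwise distinct and their eight roots are pairwise distinct, so no cancellation among the terms is possible and $g_0'$, hence $g_0$, is not identically zero. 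Multiplying $g_0$ by its seven sign-conjugates then produces a nonzero polynomial $P(p_1)$ with finitely many roots --- which is precisely the properness statement your argument needs. Until you prove that non-vanishing (by this device or some equivalent one), your proof is incomplete at its crux.
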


\begin{proof}
Let $\pi$ be the plane in $\mathbb{R}^3$ containing the point configuration $S$. 
We will assign each point in $S$ unique coordinates $(x, y, 1)$, for some $x$,$y \in \mathbb{R}$.
We further require the points of $S$ to be rotated such that no two points share a first coordinate.

Let $\overline{w}=(w_1,w_2,1)$, $\overline{x}=(x_1,x_2,1)$, $\overline{y}=(y_1,y_2,1)$ and 
$\overline{z}=(z_1,z_2,1)$ be any four points of the configuration $S$.
Let $\sigma_{\overline{p}}$ be a unit sphere, centered at $\overline{p}=(p_1,p_2,0)$ and tangent to $\pi$, to which the points of $S'$ are incident.
The point from which we project will be the center of the sphere, i.e., $\overline{p}$.

If any three of the four points are collinear they will not determine a conic section, so we assume that this is not the case.
We also assume that $\overline{x}$, $\overline{y}$ and $\overline{z}$ are labelled in a clockwise order, as seen from $p$.

Let $a=||\overline{w}-\overline{p}||$, 
$b=||\overline{x}-\overline{p}||$, $c=||\overline{y}-\overline{p}||$, 
and $d=||\overline{z}-\overline{p}||$
be the euclidean distances from each of the four points on $\pi$
to the center of the sphere $\sigma_{\overline{p}}$.
Let $\hat{w}=\frac{1}{a}(\overline{w}-\overline{p})$, $\hat{x}=\frac{1}{b}(\overline{x}-\overline{p})$, 
$\hat{y}=\frac{1}{c}(\overline{y}-\overline{p})$, and $\hat{z}=\frac{1}{d}(\overline{z}-\overline{p})$ 
be unit vectors representing four points on the sphere which project to $\overline{w}$, $\overline{x}$, 
$\overline{y}$ and $\overline{z}$, respectively.

Let $m_{\overline{p}}(\overline{x}, \overline{y}, \overline{z})$ be the plane determined by points 
at $(\hat{x}+\overline{p})$, $(\hat{y}+\overline{p})$ and $(\hat{z}+\overline{p})$.
The plane $m_{\overline{p}}(\overline{x}, \overline{y}, \overline{z})$ (which does not contain $\overline{p}$) 
intersects $\sigma_{\overline{p}}$ in a circle.
Let $c_{\overline{p}}(\overline{x}, \overline{y}, \overline{z})$ be the circle formed by the intersection of 
$m_{\overline{p}}(\overline{x}, \overline{y}, \overline{z})$ with $\sigma_{\overline{p}}$.

We denote by $(\hat{u},\hat{v})$ the scalar product of two vectors $\hat{u}$ and $\hat{v}$.
We now consider $\overline{w}$, $\overline{x}$, $\overline{y}$ and $\overline{z}$ to be fixed.
Let $\overline{c}=(\hat{y}-\hat{x})\times(\hat{z}-\hat{y})$. 
Obviously, $\overline{c}$ is a vector orthogonal to the plane $m_{\overline{p}}(\overline{x}, \overline{y}, \overline{z})$.
For any two points, e.g.,  $\hat{u}$ and $\hat{v}$, on the circle $c_{\overline{p}}(\overline{x}, \overline{y}, \overline{z})$,
their scalar products (as vectors relative to the center) with $\overline{c}$ are the same, i.e., $(\hat{u}-\overline{p}, \overline{c})=(\hat{v}-\overline{p}, \overline{c})$.
Therefore, $\hat{w}$ is on $c_{\overline{p}}(\overline{x}, \overline{y}, \overline{z})$ if and only if 
$f(p_1,p_2)\buildrel\rm def\over=(\hat{w}, \overline{c})-(\hat{x}, \overline{c}) = 0$.

The function $f(p_1,p_2)$ can be rewritten as
$(Aa+Bb+Cc+Dd)/(a\cdot b\cdot c\cdot d)$, where $A$,$B$,$C$ and $D$ are constants 
(i.e., their values are constant relative to the points $\overline{w}$, $\overline{x}$, $\overline{y}$ and $\overline{z}$).
Since we are concerned with the roots of the function $f$, we will only consider its numerator.
We also fix $p_2$ to be zero.

Let $g_0(p_1)\buildrel\rm def\over=(a\cdot b\cdot c\cdot d)\cdot f(p_1,0)=Aa+Bb+Cc+Dd$, and thus, 
\begin{multline*}
g'_0(p_1)=\frac{A(w_1-p_1)}{\sqrt{1+(w_1-p_1)^2+w_2^2}}+\frac{B(x_1-p_1)}{\sqrt{1+(x_1-p_1)^2+x_2^2}}\\
+\frac{C(y_1-p_1)}{\sqrt{1+(y_1-p_1)^2+y_2^2}}+\frac{D(z_1-p_1)}{\sqrt{1+(z_1-p_1)^2+z_2^2}}.
\end{multline*}
The first term of $g'_0(p_1)$ has two conjugate poles which are the complex zeroes of the monic quadratic 
$1+(w_1-p_1)^2+w_2^2=p_1^2 -2w_1p_1 + 1 + w_1^2 + w_2^2$.
The quadratic polynomials are all distinct, and since their roots occur in conjugate pairs, their roots are all distinct.
So, $g'_0(p_1)$ has eight poles, implying that $g_0(p1)$ is not identically zero.

Let $g_1, g_2,\ldots,g_7$ be conjugates of $g_0$ 
formed by reversing (in the seven possible ways remaining) the signs of its last three terms.
By taking the product of these eight functions we obtain a polynomial (i.e., with no radicals).
Let $P(p_1)=g_0(p_1)\cdot g_1(p_1) \cdot \ldots \cdot g_7(p_1)$.
Obviously, $P(p_1)$ is zero wherever $g_0(p1)$ is zero and furthermore, $P(p_1)$ is not identically zero.
Thus, $P(p_1)$ is a polynomial having at most a finite number of (real) roots.

Since there are only a finite number of four point combinations in $S$, there must exists a 
point $(p_1, 0, 1)$ on $\pi$ to which the sphere may be tangent such that the points of $S'$, 
no four cocircular, project onto the points of $S$.  Therefore, $t_3^{orchard}(n)=\mathcal{M}_3^{max}(n)$.
\end{proof}

\subsection{Total Number of Spheres Determined}
\label{sec:sphere-at-most-5}

In this section, we shall prove the following.
\begin{thm}
\label{thm-sph-5-pt}
Let $S$ be a set of $n$ points in $\mathbb{R}^3$, not all cospherical or coplanar, no four circular and no three collinear.
If $n\geqslant 883$, then the number of spheres determined by $S$ is at least $1+\binom{n-1}{3}-t_3^{orchard}(n-1)$. 
This bound is best possible.
\end{thm}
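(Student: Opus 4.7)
The plan is to split the argument into three cases according to the structure of $S$, and handle each separately.

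\emph{Case 1 (extremal case):} Some sphere $\sigma_0$ contains exactly $n-1$ points of $S$. Let $p$ be the remaining point. Lemma~\ref{lem-cospherical-1-off} with $k=1$ immediately produces at least $\binom{n-1}{3}-\mathcal{M}_3^{max}(n-1)$ distinct spheres through $p$ meeting $\sigma_0$ in exactly three points, and by the orchard equivalence theorem of Section~\ref{sec-orchard} we have $\mathcal{M}_3^{max}(n-1)=t_3^{orchard}(n-1)$. Adjoining $\sigma_0$ itself yields exactly $1+\binom{n-1}{3}-t_3^{orchard}(n-1)$ spheres. The ``best possible'' assertion follows from the same equivalence, which guarantees that any orchard-optimal planar configuration can be back-projected onto $\sigma_0$ so as to realize equality.

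\emph{Case 2:} Some plane $\pi_0$ contains $n-1$ points of $S$, but no sphere contains $n-1$. Let $p$ be the unique point off $\pi_0$. For each triple $\{q_1,q_2,q_3\}\subset \pi_0\cap S$, the four points $\{p,q_1,q_2,q_3\}$ are non-coplanar with no three collinear, so they determine a unique sphere. Two distinct triples producing the same sphere $\tau$ would force $\tau\cap \pi_0$ to be a circle containing at least four points of $S$, contradicting the no-four-cocircular hypothesis. Hence there are $\binom{n-1}{3}$ distinct 4-point spheres, already exceeding the target.

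\emph{Case 3 (generic case):} No $n-1$ points of $S$ lie on any single sphere or plane. Let the largest sphere in $S$ contain $n-k$ points, with $k\geq 2$. Lemma~\ref{lem-cospherical-1-off} applied to this sphere and each of its $k$ off-sphere points (each resulting sphere counted with multiplicity at most $k$) yields at least $\binom{n-k}{3}-t_3^{orchard}(n-k)$ distinct spheres whose intersection with $\sigma_0$ consists of exactly three points of $S$. To close the $\Theta(n^2)$ gap up to $\binom{n-1}{3}-t_3^{orchard}(n-1)$, I would supplement by counting 4-point spheres whose intersection with $\sigma_0$ consists of only two points: each unordered pair of off-sphere points together with each non-coplanar pair of cospherical points determines such a sphere, contributing a $\binom{k}{2}\binom{n-k}{2}$-order term. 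Should the largest sphere of $S$ itself be small, I would turn instead to spherical inversion: for each $p\in S$, Lemma~\ref{lem-inv-props} allows Theorem~\ref{thm-total-planes} to be applied to $S''_p:=Inv_p(S\setminus\{p\})\cup\{p\}$, and the planes of $S''_p$ not through $p$ pull back bijectively to spheres of $S$ through $p$; summing over $p$ and controlling the multiplicities then recovers the desired bound.

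\emph{The main obstacle} lies in Case~3. The per-point plane bound from Theorem~\ref{thm-total-planes} with $k=2$ is only quadratic in $n$, whereas the target $1+\binom{n-1}{3}-t_3^{orchard}(n-1)$ is cubic. To bridge the gap one must either exploit a moderately large sphere in $S$ through Lemma~\ref{lem-cospherical-1-off} together with the supplementary count described above, or refine the inversion argument by sub-casing on the size of the largest plane in each $S''_p$. The quantitative threshold $n\geq 883$ should emerge by combining the hypothesis of Theorem~\ref{thm-total-planes} with the bookkeeping needed to avoid over- and under-counting in these sub-cases.
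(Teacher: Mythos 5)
Your Cases 1 and 2 coincide with the paper's first two cases, and your reading of the orchard equivalence (upper bound for the lower bound, back-projection for sharpness) is right. The problem is Case 3, which is where essentially all of the work lies, and which your proposal does not actually carry out: you correctly diagnose that the inversion argument with the parameter $k=2$ only yields quadratically many planes per inversion center while the target is cubic, but you then offer two unexecuted alternatives (``one must either \ldots or \ldots'') rather than a proof. The supplementary count you sketch for a large sphere $\sigma_0$ is also not sound as stated: a pair of off-sphere points together with a pair of on-sphere points need not determine a sphere (the quadruple may be coplanar), the resulting spheres can coincide across different off-sphere pairs, and you never verify that the sum of the three-point-intersection and two-point-intersection counts actually reaches $1+\binom{n-1}{3}-t_3^{orchard}(n-1)$ for every $k\geqslant 2$.

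The paper closes exactly this gap by refining your Case 3 into a finite list of subcases. When exactly $n-2$ or exactly $n-3$ points are cospherical (or coplanar), it does not settle for the single-point bound of Lemma~\ref{lem-cospherical-1-off}: it takes the \emph{union} of the families $\sigma_p$ over the $2$ or $3$ exceptional points and applies inclusion--exclusion, using Lemmas~\ref{lem-cospherical-intersection} and \ref{lem-coplanar-intersection} to bound each pairwise intersection by $\frac{1}{3}\binom{n-k}{2}$; this yields $2\binom{n-2}{3}-\binom{n-2}{2}$, respectively $3\binom{n-3}{3}-2\binom{n-3}{2}$, both comfortably above the target. Only when at most $n-4$ points lie on \emph{any} sphere or plane does the inversion argument run, and then Lemma~\ref{lem-n-1-k-coplanar} guarantees that each $S'=Inv_p(S\backslash\{p\})$ has at most $(n-1)-4$ coplanar points, so Theorem~\ref{thm-total-planes} applies with $k=4$ and gives on the order of $2n^2$ planes per center --- enough to survive subtracting the $\frac{1}{3}\binom{n-1}{2}$ planes through $p$ and dividing by the overcount. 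One further ingredient your proposal omits entirely is Corollary~\ref{cor:lb-34-pt-planes}: the paper restricts to planes of $S'$ with at most four points (at least $\frac{5}{8}$ of all planes), so that the corresponding spheres of $S$ have at most five points and each is counted at most five times when summing over $p$. Without that restriction your ``controlling the multiplicities'' step has no bound to work with. The threshold $n\geqslant 883$ is precisely what makes Theorem~\ref{thm-total-planes} applicable with $k=4$ to the $n-1$ points of $S'$.
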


We begin with the following lemma.
\begin{lem}
\label{lem-n-1-k-coplanar}
Let $S$ be a set of $n$ points in $\mathbb{R}^3$, at most $n-k$ cospherical or coplanar, no four cocircular and no three collinear.
If $S' = Inv_p(S\backslash\{p\})$ for some $p\in S$, then $S'$ is a set of $n-1$ point with at most $n-1-k$ coplanar.
\end{lem}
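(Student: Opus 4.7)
The plan is to proceed by contradiction, exploiting the three properties of $Inv_p$ listed just before the statement. First I note that $Inv_p$ is a bijection on $\mathbb{R}^3 \setminus \{p\}$ (since it is self-inverse), so $|S'| = |S \setminus \{p\}| = n-1$, and the only thing left to show is the bound on coplanar points in $S'$.

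Suppose for contradiction that some plane $\pi'$ contains at least $n-k$ points of $S'$. I would split into two cases depending on whether $\pi'$ passes through $p$. If $p \in \pi'$, then by the second bullet $Inv_p(\pi') = \pi'$, so the preimages in $S \setminus \{p\}$ of the points of $S' \cap \pi'$ all lie on $\pi'$; adjoining $p$ itself gives at least $(n-k)+1 > n-k$ coplanar points of $S$, contradicting the hypothesis. If $p \notin \pi'$, then by the third bullet $\sigma := Inv_p(\pi')$ is a sphere passing through $p$. Applying $Inv_p$ to $S' \cap \pi'$ sends these at least $n-k$ points into $\sigma$, and since $p \in \sigma$ as well, we obtain at least $(n-k)+1 > n-k$ cospherical points of $S$, again a contradiction.

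In both cases the contradiction is immediate once the correct property of $Inv_p$ is invoked, so there is no real technical obstacle; the only subtlety is making sure that the two cases (plane through $p$ versus plane not through $p$) exhaust the possibilities and that each invocation of the inversion property transfers the coplanarity in $S'$ into either coplanarity or cosphericity in $S$, matching the combined ``cospherical or coplanar'' clause in the hypothesis. The hypotheses that no three points of $S$ are collinear and no four are cocircular are not needed for this lemma itself, but they ensure (via Lemma \ref{lem-inv-props}) that the inverted configuration retains the structural properties required by the applications of this lemma in Section \ref{sec:sphere-at-most-5}.
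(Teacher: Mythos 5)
Your proof is correct and is essentially the paper's own argument: the paper also splits on whether the plane passes through $p$, uses the fact that $Inv_p$ fixes planes through $p$ and sends other planes to spheres through $p$, and concludes that a plane with $r$ points of $S'$ yields $r+1$ coplanar or cospherical points of $S$ (the paper phrases this as a direct bound $r+1 \leqslant n-k$ rather than a contradiction, but the content is identical). Your closing remark that the collinearity and cocircularity hypotheses are not needed for this particular lemma is also accurate.
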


Since spherical inversion is self-inverse, it's also the case that $S = Inv_p(S') \cup \{p\}$.
\begin{proof}
Let $\pi$ be a plane incident to exactly $r$ points in $S'$. There are two cases.
\begin{itemize}
\item Case: $p$ is incident to $\pi$.

The $Inv_p(\pi)$ is a plane containing $r+1$ points (including $p$) in $S$.
Since $r+1$ is at most $n-k$, the lemma follows.

\item Case: $p$ is not incident to $\pi$.

The $Inv_p(\pi)$ is a sphere containing $r+1$ points (including $p$) in $S$.
Since $r+1$ is at most $n-k$, the lemma follows.

\end{itemize}
\end{proof}

We now provide a lemma, analogous to the previous one, for when $n-k$ points are coplanar.
\begin{lem}
\label{lem-coplanar-1-off}
Let $S$ be a set of $n$ points in $\mathbb{R}^3$, exactly $n-k$ coplanar, no four cocircular and no three collinear.
Let $\pi$ be the plane to which $n-k$ points are incident.
Let $p\in S$ be any point not on $\pi$.
There are at least
$\binom{n-k}{3}$ spheres determined by $p$ and three points on $\pi$.  
Furthermore, each of these spheres is incident to at most $3+k$ points of $S$.
\end{lem}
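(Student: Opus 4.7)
The plan is to prove the lemma by a short counting argument that mirrors Lemma \ref{lem-cospherical-1-off} but is actually simpler, because the roles of ``coplanar'' and ``cospherical'' are swapped in a way that eliminates the Orchard subtractive term.

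First, I would observe that for every triple $\{a,b,c\}$ among the $n-k$ points on $\pi$, the four points $\{p,a,b,c\}$ are non-coplanar (since $p\notin\pi$), and, because no three points of $S$ are collinear, $a,b,c$ themselves determine a genuine circle in $\pi$ rather than a line. Four non-coplanar points determine a unique sphere, so each such triple produces a well-defined sphere $\sigma_{a,b,c}$ through $p$. There are $\binom{n-k}{3}$ such triples.

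Next, I would show that distinct triples yield distinct spheres. If two triples $T\neq T'$ on $\pi$ gave the same sphere $\sigma$, then $\sigma\cap\pi$ would be a circle containing $|T\cup T'|\geqslant 4$ points of $S$, violating the no-four-cocircular hypothesis. Hence the assignment $T\mapsto\sigma_T$ is injective, producing at least $\binom{n-k}{3}$ distinct spheres through $p$ and three points of $\pi$.

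Finally, to bound the incidences on each such sphere $\sigma_T$, I would note that $\sigma_T\cap\pi$ is a circle, so it contains at most three points of $S$ (again by no-four-cocircular); any remaining points of $S$ on $\sigma_T$ must lie off $\pi$, but only $k$ points of $S$ (including $p$) lie off $\pi$ in total. Thus $\sigma_T$ is incident to at most $3+k$ points of $S$. There is no real obstacle here---the only conceptual point worth appreciating is why, in contrast to Lemma \ref{lem-cospherical-1-off}, no subtractive term appears: every triple on $\pi$ together with $p\notin\pi$ automatically yields four non-coplanar points and hence a sphere, so no triples are ``lost'' by accidentally determining a plane through $p$.
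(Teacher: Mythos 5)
Your proposal is correct and follows the same route as the paper, which compresses the whole argument into one sentence ("Since no three points are collinear and no four cocircular, each point triple from the plane will determine a unique sphere with $p$"); you have simply made explicit the three ingredients that sentence relies on (existence of the circumsphere of a non-degenerate tetrahedron, injectivity via the no-four-cocircular hypothesis applied to $\sigma\cap\pi$, and the $3+k$ incidence count). Your closing remark about why no Orchard-type subtractive term appears here, unlike in Lemma \ref{lem-cospherical-1-off}, is an accurate and worthwhile observation.
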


\begin{proof}
Since no three points are collinear and no four cocircular, each point triple from the plane will determine a unique sphere with $p$. 
\end{proof}

The following two lemmas place an upper bound on the intersection of two sets.  
The proof for the primary theorem of this section utilizes the inclusion-exclusion principle, 
which requires an upper bound for the size of the intersection of two such sets.
\begin{lem}
\label{lem-cospherical-intersection}
Let $S$ be a set of $n$ points in $\mathbb{R}^3$, exactly $n-k$ ($k\geqslant 2$) cospherical, no four cocircular and no three collinear.
Let $\sigma_0$ be the sphere to which $n-k$ points are incident.
Let $p\in S$ and $q\in S$ be any two distinct points not on $\sigma_0$.
Let $\sigma_p$ be the set of spheres determined by $p$ and three points from $\sigma_0$.
Likewise, let $\sigma_q$ the set of spheres determined by $q$ and three points from $\sigma_0$.
It must be the case that $|\sigma_p\cap\sigma_q|\leqslant\frac{1}{3}\binom{n-k}{2}$.
\end{lem}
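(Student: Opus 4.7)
The plan is a double-counting argument that maps each sphere in $\sigma_p \cap \sigma_q$ to several pairs of points on $\sigma_0$, and then shows each pair is hit at most once. First I would observe that every sphere $\Sigma \in \sigma_p \cap \sigma_q$ meets $\sigma_0$ in a circle $C_\Sigma$ (it cannot meet it in a point or in the empty set, since by definition $\Sigma$ already contains three points of $S \cap \sigma_0$). By the no-four-cocircular hypothesis on $S$, the circle $C_\Sigma$ contains exactly those three points of $S \cap \sigma_0$ used to build $\Sigma$, so $\Sigma$ naturally determines $\binom{3}{2} = 3$ pairs in the set of $2$-element subsets of $S \cap \sigma_0$.

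The heart of the argument is the following uniqueness claim: for each unordered pair $\{a,b\} \subset S \cap \sigma_0$, at most one sphere in $\sigma_p \cap \sigma_q$ contains both $a$ and $b$. Any such sphere would pass through the four points $p, q, a, b \in S$, and by the no-three-collinear hypothesis no three of these are collinear. I would split into two subcases. If $p, q, a, b$ are non-coplanar, then four non-coplanar points lie on a unique sphere in $\mathbb{R}^3$, so at most one sphere in $\sigma_p \cap \sigma_q$ passes through $\{a,b\}$. If instead $p, q, a, b$ are coplanar, lying in some plane $\pi$, then a sphere containing all four would intersect $\pi$ in a set containing these four non-collinear points; such an intersection must be a circle (the only alternatives, a point or the empty set, are immediately excluded), forcing $p, q, a, b$ to be cocircular and contradicting the no-four-cocircular hypothesis. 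Hence no such sphere exists at all in the coplanar subcase, and uniqueness holds.

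Combining the two observations by double counting the incidences between $\sigma_p \cap \sigma_q$ and the pairs on $\sigma_0$ gives
\[
3\,|\sigma_p \cap \sigma_q| \;\leqslant\; \binom{n-k}{2},
\]
which is the stated bound. The main obstacle is the coplanar subcase of the uniqueness step: one must use both structural hypotheses on $S$ (no three collinear and no four cocircular) together to rule it out. Once that is in place, the rest of the argument is essentially bookkeeping, and no appeal to the hypothesis $k \geqslant 2$ is needed beyond guaranteeing that two distinct points $p, q$ off $\sigma_0$ exist.
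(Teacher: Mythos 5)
Your proposal is correct and follows essentially the same route as the paper: each sphere in $\sigma_p\cap\sigma_q$ accounts for three pairs of points on $\sigma_0$, no pair can be shared by two such spheres because $p$, $q$ and the pair determine at most one sphere, and double counting gives $3|\sigma_p\cap\sigma_q|\leqslant\binom{n-k}{2}$. Your treatment of the coplanar subcase (ruling it out via the no-four-cocircular hypothesis) is a detail the paper leaves implicit, but it does not change the argument.
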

\begin{proof}
The spheres in $\sigma_p\cap\sigma_q$ all contain at least five and at most $3+k$ points.
Furthermore, no two distinct spheres in $\sigma_p\cap\sigma_q$ 
can share a pair of points from $\sigma_0$, since that pair 
along with $p$ and $q$ determine a sphere.
This reduces our problem to determining how many distinct point triples 
from $\sigma_0$ can there be such that no two triples share a pair.
From this, we see that $|\sigma_p\cap \sigma_q| \leqslant \frac{1}{3}\binom{n-k}{2}$.
\end{proof}

\begin{lem}
\label{lem-coplanar-intersection}
Let $S$ be a set of $n$ points in $\mathbb{R}^3$, exactly $n-k$ ($k\geqslant 2$) coplanar, no four cocircular and no three collinear.
Let $\pi$ be the plane to which $n-k$ points are incident.
Let $p\in S$ and $q\in S$ be any two distinct points not on $\pi$.
Let $\sigma_p$ be the set of spheres determined by $p$ and three points from $\pi$.
Likewise, let $\sigma_q$ the set of spheres determined by $q$ and three points from $\pi$.
It must be the case that $|\sigma_p\cap\sigma_q|\leqslant\frac{1}{3}\binom{n-k}{2}$.
\end{lem}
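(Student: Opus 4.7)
The plan is to mirror the proof of Lemma \ref{lem-cospherical-intersection} almost verbatim, since the argument there invoked the surface $\sigma_0$ only as a source of pairs of points, and used only the hypotheses ``no four cocircular'' and ``no three collinear.'' Both hypotheses hold equally for the $n-k$ points of $S$ on $\pi$, so one should expect the same bound by the same double-counting.

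Concretely, I would first observe that each sphere $\sigma \in \sigma_p \cap \sigma_q$ meets $\pi$ in a circle, and by ``no four cocircular'' this circle contains exactly three points of $S \cap \pi$; associate to $\sigma$ this triple. The substantive step is then to show that distinct spheres in $\sigma_p \cap \sigma_q$ give rise to triples that are \emph{pair-disjoint} (no two triples share a pair of points of $\pi$). Suppose for contradiction $\sigma_1 \neq \sigma_2$ both lie in $\sigma_p \cap \sigma_q$ and both contain a common pair $\{a,b\} \subset \pi$. Then both spheres contain the four distinct points $\{a,b,p,q\}$, which by hypothesis are not cocircular; since four non-cocircular points lie on at most one sphere, this forces $\sigma_1=\sigma_2$, a contradiction.

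Once pair-disjointness of the triples is established, the bound falls out of a trivial count: each triple consumes $\binom{3}{2}=3$ of the $\binom{n-k}{2}$ pairs available in $\pi$, so there are at most $\frac{1}{3}\binom{n-k}{2}$ such triples, hence at most $\frac{1}{3}\binom{n-k}{2}$ spheres in $\sigma_p \cap \sigma_q$.

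The only mildly delicate point, and the one I would flag as the ``main obstacle,'' is verifying the claim ``four non-cocircular points lie on at most one sphere.'' In the generic case where $\{a,b,p,q\}$ are not coplanar this is classical, but here $a,b \in \pi$ and $p,q \notin \pi$, so one must separately handle the subcase where $\{a,b,p,q\}$ happen to be coplanar (lying on some plane $\pi' \neq \pi$ through the line $ab$). In that subcase non-cocircularity forces them to lie on \emph{no} sphere at all, contradicting their membership in $\sigma_1$. Either way the two spheres must coincide, and the proof concludes as above.
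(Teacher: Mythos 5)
Your proof is correct and follows essentially the same route as the paper's: both arguments show that two distinct spheres in $\sigma_p\cap\sigma_q$ cannot share a pair $\{a,b\}$ of points of $\pi$, because $\{a,b,p,q\}$ (being four non-cocircular points of $S$) lie on at most one sphere, and then count pair-disjoint triples against the $\binom{n-k}{2}$ available pairs. Your treatment is actually more careful than the paper's, which states the key step in one sentence (and even carries over a typographical reference to $\sigma_0$ from the cospherical version); your explicit handling of the coplanar subcase of $\{a,b,p,q\}$ is a welcome detail but not a different method.
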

\begin{proof}
The spheres in $\sigma_p\cap\sigma_q$ all contain at least five and at most $3+k$ points.
Furthermore, no two distinct spheres in $\sigma_p\cap\sigma_q$ 
can share a pair of points from $\sigma_0$, since that pair 
along with $p$ and $q$ determine a sphere.
The lemma follows.
\end{proof}

\begin{proof}[Proof of Theorem \ref{thm-sph-5-pt}]

We shall consider all possible cases.

\begin{itemize}

\item Case: Exactly $n-1$ points in $S$ are cospherical.

Follows from Lemma \ref{lem-cospherical-1-off}.

\item Case: Exactly $n-1$ points in $S$ are coplanar.

Follows from Lemma \ref{lem-coplanar-1-off}.

\item Case: Exactly $n-2$ points in $S$ are cospherical.

Let $\sigma_0$ be the sphere incident to $n-2$ points.
Let $p$ and $q$ be the two points not on $\sigma_0$.
Let $\sigma_p$ be the set of spheres determined by $p$ and three points on $\sigma_0$, 
and let $\sigma_q$ be defined in similarly.

From the Inclusion-Exclusion Principle, we know $|\sigma_p\cup \sigma_q| = |\sigma_p| + |\sigma_q| - |\sigma_p\cap \sigma_q|$.
The spheres in $\sigma_p\cup\sigma_q$ are each incident to at most five points.

From Lemmas \ref{lem-cospherical-1-off} and \ref{lem-cospherical-intersection}, we see that
\begin{multline*}
|\sigma_p\cup\sigma_q| \geqslant 2\left\{ \binom{n-2}{3} - \frac{1}{3}\binom{n-2}{2} \right\}- \frac{1}{3}\binom{n-2}{2}
\\ \geqslant 2\binom{n-2}{3}-\binom{n-2}{2} \geqslant 1+\binom{n-1}{3}- t_3^{orchard}(n-1)
\end{multline*}
(for all $n\geqslant10$). Note that by the F\"{u}redi-Pal\'{a}sti configuration \cite{FP84},  
$t_3^{orchard}(n)\geqslant\lfloor\frac{1}{6}n^2-\frac{1}{2}n+1\rfloor$.

\item Case: Exactly $n-2$ points in $S$ are coplanar.

Let $\pi$ be the plane incident to $n-2$ points.
Let $p$ and $q$ be the two points not on $\pi$.
Let $\sigma_p$ be the set of spheres determined by $p$ and three points from $\pi$,
and let $\sigma_q$ be defined similarly.

From the Inclusion-Exclusion Principle, we know $|\sigma_p\cup \sigma_q| = |\sigma_p| + |\sigma_q| - |\sigma_p\cap \sigma_q|$.
The spheres in $\sigma_p\cup\sigma_q$ are each incident to at most five points.

From Lemmas \ref{lem-coplanar-1-off} and \ref{lem-coplanar-intersection}, we see that
\begin{multline*}
|\sigma_p\cup\sigma_q| \geqslant 2\binom{n-2}{3}- \frac{1}{3}\binom{n-2}{2}
\geqslant 1+\binom{n-1}{3}- t_3^{orchard}(n-1)
\end{multline*}
(for all $n\geqslant8$).

\item Case: Exactly $n-3$ points in $S$ are cospherical.

Let $\sigma_0$ be the sphere incident to $n-3$ points.
Let $p$, $q$ and $r$ be the three points not on $\sigma_0$.
Let $\sigma_p$ be the set of spheres determined by $p$ and three points on $\sigma_0$, 
and let $\sigma_q$ and $\sigma_r$ be defined similarly.

Again, the Inclusion-Exclusion Principle shows that
\[
|\sigma_p \cup \sigma_q \cup \sigma_r| \geqslant |\sigma_p| + |\sigma_q| + |\sigma_r| - |\sigma_p\cap \sigma_q| - |\sigma_p \cap \sigma_r| - |\sigma_q\cap \sigma_r|.
\]

The set $\sigma_p \cap \sigma_q \cap \sigma_r$ contains precisely those spheres incident to six points.
Since these spheres are already subtracted, we need not alter the formula above.

Therefore, the number of spheres incident to at most five point is at least
\begin{multline*}
3\left\{\binom{n-3}{3}- \frac{1}{3}\binom{n-3}{2}\right\}- \binom{n-3}{2}
\\= 3\binom{n-3}{3}-2\binom{n-3}{2}\geqslant  1+\binom{n-1}{3}- t_3^{orchard}(n-1)
\end{multline*}
(for all $n\geqslant 11$).

\item Case: Exactly $n-3$ points in $S$ are coplanar.

Let $\pi$ be the sphere incident to $n-3$ points.
Let $p$, $q$ and $r$ be the three points not on $\pi$.
Let $\sigma_p$ be the set of spheres determined by $p$ and three points on $\pi$, 
and let $\sigma_q$ and $\sigma_r$ be defined similarly.

Again, the Inclusion-Exclusion Principle shows that
\[
|\sigma_p \cup \sigma_q \cup \sigma_r| \geqslant |\sigma_p| + |\sigma_q| + |\sigma_r| - |\sigma_p\cap \sigma_q| - |\sigma_p \cap \sigma_r| - |\sigma_q\cap \sigma_r|.
\]

The set $\sigma_p \cap \sigma_q \cap \sigma_r$ contains precisely those spheres incident to six points.
Since these spheres are already subtracted, we need not alter the formula above.

Therefore, the number of spheres incident to at most five point is at least
\[
3\binom{n-3}{3}- \binom{n-3}{2} \geqslant  1+\binom{n-1}{3}- t_3^{orchard}(n-1)
\]
(for all $n\geqslant 10$).

\item Case: At most $n-4$ points in $S$ lie on any plane or sphere.

Let $S'=Inv_p(S\backslash\{p\}$ for some $p\in S$. 
$S'$ is a set of $n-1$ points with at most $n-3$ on any plane.

By combining Theorem \ref{thm-total-planes}, Corollary \ref{cor:lb-34-pt-planes} and Lemma \ref{lem-n-1-k-coplanar}, we know that if $n \geqslant 883$
then the number of planes determined at most four points in $S'$ is at least 
$\frac{5}{8}\left\{1+4\binom{n-5}{2}-\frac{n-5}{2}\binom{4}{2}\right\}=\frac{1}{8}(10 n^2-125 n+380)$.
We need an upper bound on how many of these planes pass through $p$.

From the proof of Lemma \ref{lem-cospherical-1-off}, we know that the number of planes determined 
by points in $S'$ passing through $p$
is less than $ \frac{1}{3}\binom{n-1}{2}$.
All other planes determined in $S'$ correspond to a sphere determined by points in $S$.
The number of spheres, passing through $p$, determined by at most five points in $S$ is at least
\[
\frac{1}{8}(10 n^2-125 n+380)- \frac{1}{3}\binom{n-1}{2}
= \frac{1}{24}(26 n^2-363 n+1132).
\] 

We can repeat this argument for all $n$ points in $S$, counting each sphere at most five times.
Thus, the number of spheres determined by $S$ is at least
\begin{multline*}
\frac{n}{5}\left(\frac{1}{24}\right)(26 n^2-363 n+1132)=\frac{1}{120}( 26 n^3-363 n^2+1132 n)
\\ \geqslant 1+\binom{n-1}{3}- t_3^{orchard}(n-1)
\end{multline*}
(for all $n\geqslant34$).
\end{itemize}

\end{proof}

\section{Conclusion}

In this paper, our results for point sets in $\mathbb{R}^3$ assumed no three points are collinear.
It would be interesting to know whether similar results could be obtained by assuming that
at most $j$ points are collinear, for some fixed $j\geqslant3$.

If one were to allow an arbitrary number of collinear points, the lower bounds for 
the number of objects determined (e.g., lines, planes, etc.) become weaker.
Consider the class of configurations which have exactly $n-k$ points collinear for a fixed $k$.
We note the following:
\begin{itemize}

\item At most $\binom{k}{2} + k(n-k) + 1 = O(n)$ lines are determined.

\item At most $\binom{k}{3} + (n-k)\binom{k}{2} + k = O(n)$ planes are determined.

\item At most $\binom{k}{4} +  (n-k)\binom{k}{3} +  \binom{k}{2}\binom{n-k}{2} = O(n^2)$ spheres are determined.

\end{itemize}

It would also be interesting to know whether the results of this paper
could be matched, or improved upon, using oriented matroids.

\bibliographystyle{IEEEtran}
\bibliography{planesSpheres}

\end{document}